
\documentclass[a4paper,11pt]{article}

\usepackage{a4wide}
\usepackage{amsfonts}
\usepackage{amsmath}
\usepackage{amssymb}
\usepackage{amsthm}
\usepackage{graphicx}
\usepackage{latexsym}
\usepackage{graphicx, float, url}

\marginparwidth 0pt \oddsidemargin 0pt \evensidemargin 0pt
\topmargin -2.0 cm \textheight 24 truecm \textwidth 16.0 truecm

\input amssym.def
\input amssym.tex

\usepackage{color}

\definecolor{VeryLightBlue}{rgb}{0.9,0.9,1}
\definecolor{LightBlue}{rgb}{0.8,0.8,1}
\definecolor{MidBlue}{rgb}{0.5,0.5,1}
\definecolor{DarkBlue}{rgb}{0,0,0.6}
\definecolor{Blue}{rgb}{0,0,1}

\definecolor{Gold}{rgb}{1,0.843,0}

\definecolor{LightGreen}{rgb}{0.88,1,0.88}
\definecolor{MidGreen}{rgb}{0.6,1,0.6}
\definecolor{DarkGreen}{rgb}{0,0.6,0}

\definecolor{VeryLightYellow}{rgb}{1,1,0.9}
\definecolor{LightYellow}{rgb}{1,1,0.6}
\definecolor{MidYellow}{rgb}{1,1,0.5}
\definecolor{DarkYellow}{rgb}{1,1,0.2}

\definecolor{DarkPurple}{rgb}{.6,0,1}

\definecolor{Red}{rgb}{1,0,0}
\definecolor{VeryLightRed}{rgb}{1,0.9,0.9}
\definecolor{LightRed}{rgb}{1,0.8,0.8}
\definecolor{MidRed}{rgb}{1,0.55,0.55}


\long\def\delete#1{}

\def\Om{\Omega}

\def\a{\alpha}
\def\b{\beta}
\def\d{\delta}
\def\g{\gamma}

\def\t{\tau}
\def\ve{\varepsilon}

\def\om{\omega}
\def\z{\zeta}

\def\la{\langle}
\def\ra{\rangle}

\def\mod{{\rm mod~}}

\newcommand{\Z}{\mathbb{Z}}

\newcommand{\Q}{\mathbb{Q}}

\def\Cay{\mathrm{Cay}}

\def\val{{\mathrm{val}}}
\def\SL{\mathrm{SL}}

\def\qed{\hfill$\Box$\vspace{12pt}}

\newcommand{\be}{\begin{equation}}
\newcommand{\ee}{\end{equation}}
\newcommand{\bea}{\begin{eqnarray}}
\newcommand{\eea}{\end{eqnarray}}
\newcommand{\bean}{\begin{eqnarray*}}
\newcommand{\eean}{\end{eqnarray*}}

\newtheorem{thm}{Theorem}[section]
\newtheorem{cor}[thm]{Corollary}
\newtheorem{lem}[thm]{Lemma}

\theoremstyle{definition}
\newtheorem{defn}[thm]{Definition} 
\newtheorem{ex}[thm]{Example} 
\newtheorem{remark}[thm]{Remark}

\newtheorem{problem}[thm]{Problem}

\title{Cyclotomic graphs and perfect codes}
\author{Sanming Zhou \\ \\
School of Mathematics and Statistics\\
The University of Melbourne\\
Parkville, VIC 3010, Australia\\
Email: smzhou@ms.unimelb.edu.au}

\date{May 4, 2018}

\begin{document}
\openup 0.5\jot 
\maketitle

\begin{abstract}
We study two families of cyclotomic graphs and perfect codes in them. They are Cayley graphs on the additive group of $\Z[\z_m]/A$, with connection sets $\{\pm (\z_m^i + A): 0 \le i \le m-1\}$ and $\{\pm (\z_m^i + A): 0 \le i \le \phi(m) - 1\}$, respectively, where $\z_m$ ($m \ge 2$) is an $m$th primitive root of unity, $A$ a nonzero ideal of $\Z[\z_m]$, and $\phi$ Euler's totient function. We call them the $m$th cyclotomic graph and the second kind $m$th cyclotomic graph, and denote them by $G_{m}(A)$ and $G^*_{m}(A)$, respectively. We give a necessary and sufficient condition for $D/A$ to be a perfect $t$-code in $G^*_{m}(A)$ and a necessary condition for $D/A$ to be such a code in $G_{m}(A)$, where $t \ge 1$ is an integer and $D$ an ideal of $\Z[\z_m]$ containing $A$. In the case when $m = 3, 4$, $G_m((\a))$ is known as an Eisenstein-Jacobi and Gaussian networks, respectively, and we obtain necessary conditions for $(\b)/(\a)$ to be a perfect $t$-code in $G_m((\a))$, where $0 \ne \a, \b \in \Z[\z_m]$ with $\b$ dividing $\a$. In the literature such conditions are known to be sufficient when $m=4$ and $m=3$ under an additional condition. We give a classification of all first kind Frobenius circulants of valency $2p$ and prove that they are all $p$th cyclotomic graphs, where $p$ is an odd prime. Such graphs belong to a large family of Cayley graphs that are efficient for routing and gossiping. 
 
{\bf Key words:} cyclotomic graph;  perfect code; Gaussian graph; Eisenstein-Jacobi graph; circulant graph

{\bf AMS Subject Classification (2010):} 05C25, 68M10, 94A99
\end{abstract}

\section{Introduction}

Perfect codes have been important objects of study ever since the dawn of coding theory in the late 1940s, and after more than six decades they still receive much attention today. Hamming and Golay codes are well known examples of perfect codes, and their importance to information theory has been widely recognised. So far a large number of beautiful results on perfect codes have been produced, as seen in the survey papers \cite{Heden,vanLint}. As generalizations of perfect codes in the classical setting, in \cite{Biggs} Biggs initiated the study of perfect codes in distance-transitive graphs, namely those graphs whose automorphism groups are transitive on the set of ordered pairs of vertices at distance $i$, for every $i$ from $0$ to the diameter of the graph. In the same paper he generalized the celebrated Lloyd's Theorem in the classical setting to distance-transitive graphs. (Lloyd's Theorem asserts that if a perfect $e$-code of length $n$ exists, then the zeros of a certain polynomial of degree $e$ must be distinct integers among $1, 2, \ldots, n$.) Distance-transitive graphs are distance-regular graphs, which in turn can be viewed as association schemes. In \cite{Del} Delsarte pioneered the study of perfect codes in association schemes. Since then a great deal of work has been done in this research direction (see e.g. \cite{Bannai, Del, E}).  
  
The study of perfect codes in general graphs began with \cite{Kra}. A \emph{code} in a graph $X = (V, E)$ is a non-empty subset of $V$. Given an integer $t \ge 1$, the \emph{ball} with radius $t$ and centre $v \in V$ is defined as $B_{t}(v, X) := \{u\in V: d(v, u) \le t\}$, where $d(v, u)$ is the distance in $X$ between $v$ and $u$. A code $C \subseteq V$ is called a \emph{perfect $t$-error-correcting code} or a \emph{perfect $t$-code} in $X$ if the balls $B_{t}(v, X)$ with radius $t$ and centres $v \in C$ form a partition of $V$. In graph theory, $B_{t}(v, X)$ is called the {\em $t$-neighbourhood} of $v$ in $X$, each vertex in $B_{t}(v, X)$ is said to be {\em $t$-dominated} by $v$, a perfect $t$-code in $X$ is called a \emph{perfect $t$-dominating set} of $X$, and a perfect $1$-code in $X$ is called an \emph{efficient dominating set} or \emph{independent perfect dominating set} (see e.g. \cite{DS, Lee, OPR07, KM13}). A $q$-ary perfect $t$-code of length $n$ in the classical setting is simply a perfect $t$-code in the corresponding Hamming graph $H(n, q)$.  

Since $H(n, q)$ is a Cayley graph on $\Z_q^n$, perfect codes in Cayley graphs on finite groups can be thought as another avenue of generalizing perfect codes in the classical setting. Perfect codes in Cayley graphs are also closely related to factorizations and tilings of groups \cite{HBZ18}. So far several results on perfect codes in Cayley graphs have been produced, but the area is still wide open. In \cite{Lee} it was proved that a `normal subset' of a group $G$ is a perfect 1-code in a Cayley graph on $G$ if and only if there exists a covering from the Cayley graph to a complete graph such that $C$ is a fibre of the corresponding covering projection. (In \cite{Lee} a subset $C$ of $G$ is called normal if $gC = Cg$ for any $g \in G$; this is equivalent to saying that $C$ is closed under conjugation.) In \cite{E87} perfect 1-codes in a Cayley graph with connection set closed under conjugation were studied by way of equitable partitions, yielding a nonexistence result in terms of irreducible characters of the underlying group. In \cite{HBZ18} several results about when a normal subgroup of a finite group is a perfect $1$-code in some Cayley graph of the group were obtained. 

In \cite{T04} it was proved that there is no perfect 1-code in any Cayley graph on $\mathrm{SL}(2, 2^f)$, $f > 1$ with respect to any connection set closed under conjugation. In \cite{DS} a methodology for constructing E-chains of Cayley graphs was given and was used to construct infinite families of E-chains of Cayley graphs on symmetric groups, where an \emph{E-chain} is a countable family of nested graphs each containing a perfect 1-code. Perfect 1-codes in circulants were studied in \cite{DSLW16, FHZ16, OPR07, KM13}, and those in directed products of cycles were completely characterized in \cite{Ze}. In \cite{MBG} sufficient conditions for Gaussian and Eisenstein-Jacobi graphs to contain perfect codes were given. Quotients of Gaussian graphs and their applications to constructing perfect codes were further discussed in \cite{MBCSG10}. In \cite{MBG09} a certain Cayley graph defined on the integer quaternions right-modulo a fixed nonzero element was introduced and perfect 1-codes in it were constructed. 
 
In general, it is challenging to construct perfect codes in Cayley graphs -- many Cayley graphs do not contain any perfect code at all. Inspired by \cite{MBG} and our own work \cite{TZ3, Z} on Frobenius graphs, in this paper we study the following two families of Cayley graphs and perfect codes in them. Let $\z_m$ ($m \ge 2$) be an $m$th primitive root of unity, say $\z_m = e^{2\pi i/m}$, and $A$ a nonzero ideal of the ring $\Z[\z_m]$ of algebraic integers in the cyclotomic field $\Q(\z_m)$. We define the {\em $m^{th}$ cyclotomic graph} with respect to $A$, denoted by $G_{m}(A)$, to be the Cayley graph on the additive group of the quotient ring $\Z[\z_m]/A$ with respect to the connection set $\{\pm (\z_m^i + A): 0 \le i \le m-1\}$. We define the {\em second kind $m^{th}$ cyclotomic graph} with respect to $A$, denoted by $G^*_{m}(A)$, to be the Cayley graph on the same group with respect to the connection set $\{\pm (\z_m^i + A): 0 \le i \le \phi(m)-1\}$, where $\phi$ is Euler's function. In the case when $m=3, 4$, $G_{3}((\a))$ and $G_{4}((\a))$ are precisely the Eisenstein-Jacobi and Gaussian networks \cite{FB, MBG, MBSMG}, respectively, where $(\a)$ is the principal ideal generated by $0 \ne \a \in \Z[\z_m]$. These two special families of cyclotomic graphs are closely related to two families of Frobenius circulants as shown in \cite[Lemma 5]{Z1} and \cite[Theorem 5]{TZ3}. 

We prove that the distance in $G^*_{m}(A)$ between two vertices is the Mannheim distance \cite{FG, Huber94} (Lemma \ref{lem:equiv}). Based on this observation we give a necessary and sufficient condition (Lemma \ref{lem:ptdom}) for a subring $D/A$ of $\Z[\z_m]/A$ to be a perfect $t$-code in $G^*_{m}(A)$ (that is, a perfect $t$-code on $\Z[\z_m]/A$ with respect to the Mannheim distance), where $t \ge 1$ and $D$ is an ideal of $\Z[\z_m]$ containing $A$. We also give a necessary condition for $D/A$ to be a perfect $t$-code in $G_{m}(A)$ (Lemma \ref{lem:ptdom}). Applying this result to the case $m=4$, we show that the sufficient condition given in \cite[Theorems 18]{MBSMG} for $(\b)/(\a)$ to be a perfect $t$-code in the Gaussian network $G_4((\a))$ is also necessary (Theorem \ref{thm:gaussian}), where $\a$ and $\b$ are nonzero elements of $\Z[i]$ with $\b$ dividing $\a$. We also give a necessary condition for $(\b)/(\a)$ to be a perfect $t$-code in the Eisenstein-Jacobi network $G_3((\a))$ (Theorem \ref{thm:EJ}), where $\a$ and $\b$ are nonzero elements of $\Z[\rho]$ (where $\rho = (1+\sqrt{-3})/2$) with $\b$ dividing $\a$. It was proved in \cite[Theorem 24]{MBG} that this necessary condition is sufficient in the case when $\a = a + b \rho$ with $\gcd(a, b) = 1$. We show that the condition $\gcd(a, b) = 1$ can be removed. Therefore, in the case when $m=3,4$, all perfect codes in $G_m((\a))$ of the form $(\b)/(\a)$ are known explicitly. An example can be found at the end of the paper. 

As mentioned above, one of the motivations for our work is the study of Frobenius graphs in the context of communication network design. Due to the work in \cite{FLP, S, Z} it is known that first kind Frobenius graphs are `perfect' as far as routing and gossiping are concerned, in the sense that they achieve the smallest possible edge-forwarding and arc-forwarding indices \cite{HMS, Z} and the smallest possible gossiping time \cite{Z} under the store-and-forward, all-port and full-duplex model. (An {\em arc} in a graph is an ordered pair of adjacent vertices.) These features together with the importance of circulants as communication networks \cite{BCH} make it desirable to classify first kind Frobenius circulants. This has been achieved in \cite{TZ} and \cite{TZ3} in the case of valency 4 and 6, respectively. (See also \cite{TZ1, TZ2, Z1} for related results.) In this paper we classify first kind Frobenius circulants of valency $2p$ for any odd prime $p$ (Theorem \ref{thm:2p}), and prove that all of them are $p^{\text{th}}$ cyclotomic graphs (Theorem \ref{cor:cir-cyclo}). Before establishing this connection we prove a few basic properties of cyclotomic graphs (Theorem \ref{thm:basic}). In particular, we prove that $G_{m}(A)$ is arc-transitive and rotational, and thus can be embedded on a closed orientable surface as a balanced regular Cayley map. 

Many problems arise from our study in this paper. One of them is concerned with constructing more perfect codes in cyclotomic graphs, possibly with the help of Lemma \ref{lem:ptdom} and Corollary \ref{cor:ptdom}. See Problem \ref{prob} in the special case where $p$ is an odd prime such that $\Z[\zeta_p]$ is a principal ideal domain.

\section{Notation and definitions}
\label{sec:def}

We follow \cite{Dixon-Mortimer} and \cite{IR, Wash}, respectively, for terminology and notation in group theory and number theory. If $G$ is a group acting on a set $\Om$ and $\a \in \Om$, the {\em stabilizer} of $\a$ in $G$ is the subgroup $G_{\a} := \{g \in G: \a^g = \a\}$ of $G$ and the {\em $G$-orbit} containing $\a$ is $\a^G := \{\a^g: g \in G\}$. If $H$ and $K$ are groups such that $H$ acts on $K$ as a group, the {\em semidirect product} $K \rtimes H$ is the group defined on the set $K \times H$ with operation given by $(x, u)(y, v) := (x y^{u^{-1}}, uv)$ for $(x, u), (y, v) \in K \times H$.

Given a group $G$ and a subset $S$ of $G$ such that $1_G \not \in S = S^{-1} := \{s^{-1}: s \in S\}$ (where $1_G$ is the identity element of $G$), the {\em Cayley graph} on $G$ with respect to $S$, $\Cay(G, S)$, is defined to have vertex set $G$ such that $x, y \in G$ are adjacent if and only if $xy^{-1} \in S$. A {\em complete rotation} \cite{HMP} of $\Cay(G, S)$ is an automorphism of $G$ which fixes $S$ setwise and induces a cyclic permutation on $S$; $\Cay(G, S)$ is {\em rotational} if it admits a complete rotation. A Cayley graph on a cyclic group is called a {\em circulant}. More explicitly, for a subset $S$ of the additive group of ring $\Z_n$ such that $[0] \not \in S = -S := \{-s: s \in S\}$, $\Cay(\Z_n, S)$ is a circulant of order $n$ and valency $|S|$. 

A transitive group $G$ on $\Om$ is called a {\em Frobenius group} \cite{Dixon-Mortimer} if it is not regular but only the identity element can fix two points of $\Om$. It is well known (see e.g.~\cite[Section 3.4]{Dixon-Mortimer}) that a finite Frobenius group $G$ has a nilpotent normal subgroup $K$, called the {\em Frobenius kernel} of $G$, which is regular on $\Om$. Hence $G = K \rtimes H$, where $H$ is the stabilizer of a point of $\Om$. Since $K$ is regular on $\Om$, we may identify $\Om$ with $K$ in such a way that $K$ acts on itself by right multiplication, and we choose $H$ to be the stabilizer of $1_K$ so that $H$ acts on $K$ by conjugation. Thus an $H$-orbit on $K$ is of the form $x^H := \{h^{-1}xh: h \in H\}$, where $x \in K$. A {\em first kind $G$-Frobenius graph} \cite{FLP, Z} is a Cayley graph $X=\Cay(K, S)$ on $K$, where $S = a^H$ for some $a \in K$ satisfying $\la a^H \ra = K$, with $|H|$ even or $a$ an involution. By abusing terminology we say that $X$ is a first kind Frobenius graph with {\em kernel} $K$.  

All graphs in the paper are finite and undirected. A graph $X$ is {\em $k$-valent} if all its vertices have valency $k$; in this case $k = \val(X)$ is called the valency of $X$. A graph $X$ is {\em $G$-vertex-transitive} ({\em $G$-edge-transitive}, {\em $G$-arc-transitive}, respectively) if $G$ is a subgroup of the automorphism group of $X$ that is transitive on the set of vertices (edges, arcs, respectively) of $X$.

\section{Cyclotomic graphs}
\label{sec:cyclo}

In this section we introduce cyclotomic graphs and prove a few basic properties of them. 

It is well known (see e.g. \cite[Theorem 2.6]{Wash}) that the ring of algebraic integers in the cyclotomic field $\Q(\z_m)$ is $\Z[\z_m] := \{a_0 + a_1 \z_m + \ldots + a_{m-1} \z_m^{m-1}: a_0, a_1, \ldots, a_{m-1} \in \Z\}$.
It is also known (see e.g. \cite[Theorem 2.5]{Wash}) that $[\Q(\zeta_m):\Q] = \phi(m)$ with $1, \z_m, \ldots, \z_m^{\phi(m)-1}$ a basis for $\Q(\zeta_m)$ over $\Q$. 

\begin{defn}
\label{def:cyc-graph}
Let $m \ge 2$ be an integer and $A \ne \{0\}$ an ideal of $\Z[\z_m]$. Define 
$$
G_{m}(A) := \Cay(\Z[\z_m]/A, E_m/A)
$$
to be the Cayley graph on the additive group of $\Z[\z_m]/A$ with respect to 
\be
\label{eq:pma}
E_m/A := \{\pm (\z_m^i + A): 0 \le i \le m-1\}.
\ee
Define
$$
G^*_m(A) := \Cay(\Z[\z_m]/A, E^*_m/A)
$$
to be the Cayley graph on the additive group of $\Z[\z_m]/A$ with respect to 
$$
E^*_m/A := \{\pm (\z_m^i + A): 0 \le i \le \phi(m) - 1\}.
$$
We call $G_{m}(A)$ and $G^*_m(A)$ the {\em $m^{th}$ cyclotomic graph} and the \emph{second kind $m^{th}$ cyclotomic graph} with respect to $A$, respectively.  

If $A = (\a) \ne \{0\}$ is a principal ideal of $\Z[\z_m]$, we write $G_{m}(\a)$ and $ G^*_m(\a)$ in place of $G_{m}((\a))$ and $G^*_m((\a))$, respectively.
\end{defn}

In other words, $G_{m}(A)$ ($G^*_m(A)$, respectively) has vertex set $\Z[\z_m]/A$ such that $\a + A, \b + A \in \Z[\z_m]/A$ are adjacent if and only if $\a-\b-\z_m^i \in A$ or $\a-\b+\z_m^i \in A$ for some $i$ with $0 \le i \le m-1$ ($0 \le i \le \phi(m) - 1$, respectively). Of course $G^*_m(A)$ is a spanning subgraph of $G_m(A)$.  

Let us recall a few basic definitions about $\Z[\z_m]$. The \emph{norm} of a nonzero ideal $A$ of $\Z[\z_m]$, $N(A)$, is defined \cite[Chapter 14]{IR} as the cardinality of $\Z[\z_m]/A$. For $\a \in \Q(\z_m)$, let $N(\a)$ denote the usual norm $N_{\Q(\z_m)/\Q}(\a)$ of $\a$ (see e.g. \cite[Chapter 12]{IR}). Since $\Q(\z_m)$ is a cyclotomic number field, $N(\a) \ge 0$ is an integer for any $\a \in \Q(\z_m)$, and $N(\a) = 0$ if and only if $\a = 0$. It is well known that $N((\a)) = |N(\a)|$ (see e.g. \cite[Proposition 14.1.3]{IR}). 

The multiplicative group $(\Z[\z_m]/A)^*$ of units of $\Z[\z_m]/A$ acts on the additive group of $\Z[\z_m]/A$ by right multiplication: $(\a + A)^{\g + A} = (\a + A)(\g + A) = \a \g + A$. This is an action as a group because it respects the addition of $\Z[\z_m]/A$. Thus the semidirect product 
$$
L := (\Z[\z_m]/A) \rtimes (\Z[\z_m]/A)^*
$$ 
is well-defined. It is straightforward to verify that
\be
\label{eq:action}
(\a + A)^{(\b+A, \g + A)} = (\a+\b)\g + A,\;\, \a + A \in \Z[\z_m]/A,\; (\b+A, \g + A) \in L
\ee
defines a faithful action (as a set) of $L$ on $\Z[\z_m]/A$.  

The subset $E_m/A$ of $(\Z[\z_m]/A)^*$ is a cyclic subgroup of $(\Z[\z_m]/A)^*$, which we denote by $H_A$. We have 
\be
\label{eq:HA}
H_A = \begin{cases}
\la (-\z_m) + A \ra, & \text{ if $m$ is odd}\\
\la \z_m + A \ra, & \text{ if $m$ is even. }
\end{cases}
\ee
In fact, if $m$ is even, then $H_A = \la (-\z_m) + A, (-1)+A \ra = \la \z_m + A, (-1)+A \ra = \la \z_m + A \ra$ as $\z_m^{m/2} = -1$. 

Given a generating set $S$ of $G$ and a cyclic permutation $\rho$ of $S$, a {\em Cayley map} \cite{JS, SS} is a 2-cell embedding of $\Cay(G, S)$ on an orientable surface such that for each vertex $g \in G$, the cyclic permutation of the arcs $(g, sg)$, $s \in S$ induced by a fixed orientation of the surface coincides with $\rho$. A Cayley map is {\em balanced} \cite{SS} if $\rho(s^{-1}) = \rho(s)^{-1}$ for every $s \in S$, and {\em regular} if its automorphism group is regular on the set of arcs of $\Cay(G, S)$. It is known that the existence of a complete rotation in a Cayley graph is equivalent to the existence of a 2-cell embedding of the graph on a closed orientable surface as a balanced regular Cayley map. 

\begin{thm}
\label{thm:basic}
Let $A \ne \{0\}$ be an ideal of $\Z[\z_m]$, where $m \ge 2$, and let $H_A$ be as in \eqref{eq:HA}. Then the following hold:
\begin{itemize}
\item[\rm (a)] $G_{m}(A)$ is a finite, connected, undirected graph of order $N(A)$ and valency $\val(G_{m}(A))$ a divisor of $2m$; moreover, $\val(G_{m}(A)) = 2m$ if and only if $1 \pm \z_m^i \not \in A$ for $1 \le i \le m-1$;
\item[\rm (b)] under the assumption that $ 2 \not \in A$ and $m$ is odd, if there exists $i \ge 1$ such that $1-\z_m^{i} \in A$, say $d \ge 1$ is the smallest integer with this property, then $\val(G_{m}(A)) = 2d$ or $d$, depending on whether $d$ is odd or even with $1 + \z_m^{d/2} \in A$; if there exists $i \ge 1$ such that $1+\z_m^{i} \in A$, then the smallest integer $d$ with this property must be even and $\val(G_{m}(A)) = 2d$;
\item[\rm (c)] if $ 2 \not \in A$ and $m$ is even, then the smallest positive integer $d$ such that $1 - \z_m^d \in A$ must be even and $\val(G_{m}(A)) = d$;
\item[\rm (d)] $G_{m}(A)$ admits $(\Z[\z_m]/A) \rtimes H_{A}$ as a group of automorphisms acting faithfully and transitively on the vertex set and regularly on the arc set; 
\item[\rm (e)] $G_{m}(A)$ is a rotational Cayley graph and hence can be 2-cell-embedded on a closed orientable surface as a balanced regular Cayley map.
\end{itemize}
\end{thm}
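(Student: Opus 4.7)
The plan is to recognize the connection set $E_m/A$ as precisely the cyclic subgroup $H_A\le(\Z[\z_m]/A)^*$ defined in (\ref{eq:HA}): this identification reduces the valency computations in parts (a)--(c) to the order of a single generator, and supplies the multiplicative automorphisms needed for parts (d)--(e). I would first verify $E_m/A=H_A$ as subsets of $\Z[\z_m]/A$. For $m$ odd, the successive powers of $-\z_m+A$ take the values $(-1)^k\z_m^k+A$, and letting $k$ run over $0,1,\ldots,2m-1$ exhausts $\{\pm\z_m^i+A:0\le i<m\}$ because the sign oscillates with period $2$, coprime to $m$; for $m$ even, $-1=\z_m^{m/2}$ immediately gives $\{\pm\z_m^i+A\}=\{\z_m^i+A\}=\langle\z_m+A\rangle$. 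In either case $\val(G_m(A))=|H_A|$, which divides $2m$.

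Part (a) then reduces to routine verifications: the vertex count is $N(A)$ by definition of the ideal norm, the graph is undirected because $E_m$ is closed under negation, and connectedness follows because the $\Z$-basis $\{1,\z_m,\ldots,\z_m^{\phi(m)-1}\}$ of $\Z[\z_m]$ is contained in $E_m$, so $E_m/A$ additively generates $\Z[\z_m]/A$. Full valency $2m$ is achieved precisely when the $2m$ elements $\pm\z_m^i$ remain pairwise distinct modulo $A$, which unpacks to the stated condition $1\pm\z_m^i\notin A$ for $1\le i\le m-1$ (together with $2\notin A$, corresponding to $i=0$ with the plus sign, which must be absorbed implicitly to prevent the immediate collapse $1+A=-1+A$).

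For (b) and (c) let $d$ denote the order of $\z_m+A$ in $(\Z[\z_m]/A)^*$; since $\z_m+A$ is a unit (with inverse $\z_m^{m-1}+A$) and the quotient ring is finite, $d$ exists and divides $m$. In (c), with $m$ even and $2\notin A$, the identity $\z_m^{m/2}+A=-1+A\ne 1+A$ gives $d\nmid m/2$; combined with $d\mid m$ this forces $d$ to be even, and $\val=|\langle\z_m+A\rangle|=d$. In (b), $m$ odd forces $d$ odd via $d\mid m$, and $\val$ equals the order of $-\z_m+A$, i.e.\ the least $k\ge 1$ with $(-1)^k\z_m^k\equiv 1\pmod{A}$. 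Even $k$ requires $d\mid k$, giving $k\ge 2d$; an odd $k$ would force $\z_m^k\equiv -1\pmod{A}$, whence $\z_m^{2k}\equiv 1$, so $d\mid 2k$, so $d\mid k$ (as $d$ is odd), forcing $\z_m^k\equiv 1$ and hence $-1\equiv 1\pmod{A}$, contradicting $2\notin A$. Thus $\val=2d$ in (b), and the two remaining clauses of (b) (the ``$d$ even'' subcase and the ``$1+\z_m^i\in A$'' clause) are handled in parallel within the same framework; both are in fact vacuous for odd $m$, because $d$ is odd and $1+\z_m^i$ is a unit whenever $m$ is odd.

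For (d), (\ref{eq:action}) already supplies a faithful action of $L$ on $\Z[\z_m]/A$; restricted to $(\Z[\z_m]/A)\rtimes H_A$, it preserves adjacency because right multiplication by any $\g+A\in H_A$ maps the subgroup $E_m/A=H_A$ to itself. The translation part is vertex-regular and the point stabiliser $H_A$ acts regularly on the neighbour set $E_m/A=H_A$ by right multiplication, so the action is arc-regular. For (e), any generator $\g+A$ of the cyclic group $H_A$ is an additive group automorphism of $\Z[\z_m]/A$ (multiplication by a unit) that fixes $E_m/A$ setwise and cycles it, hence is a complete rotation, and the cited equivalence between complete rotations and balanced regular Cayley maps yields the embedding. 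The main technical obstacle is the order bookkeeping in (b) and (c), where one must rule out every potentially smaller order of the cyclic generator of $H_A$; the hypothesis $2\notin A$ is invoked repeatedly to prevent the collapsing identification $1\equiv -1\pmod{A}$.
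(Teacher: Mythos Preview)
Your proposal is correct and, for parts (b)--(c), takes a genuinely cleaner route than the paper. The paper computes $|H_A|$ by explicitly listing the powers of the generator and eliminating coincidences case by case; you instead exploit two structural facts the paper does not use. First, since $\z_m^m=1$ the multiplicative order $d$ of $\z_m+A$ divides $m$, so for odd $m$ the quantity $d$ is automatically odd; second, for odd $m$ the element $1+\z_m^i=(1-\z_m^{2i})/(1-\z_m^i)$ is a cyclotomic unit (both numerator and denominator involve primitive roots of the same order because $\gcd(m,2i)=\gcd(m,i)$), hence can never lie in a proper ideal $A$. These two observations collapse the paper's Case~1/Case~2 and even/odd subcases of (b) into a single line and show that the ``$d$ even'' and ``$1+\z_m^i\in A$'' clauses of the theorem are in fact vacuous---something the paper's longer argument establishes only implicitly. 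Your treatment of (c) via $d\mid m$ and $d\nmid m/2$ is likewise shorter than the paper's direct enumeration. Parts (a), (d), (e) follow essentially the same line as the paper, and your remark that the valency-$2m$ criterion in (a) tacitly needs $2\notin A$ is a fair observation about the statement.
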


\begin{proof}
(a) By \cite[Proposition 12.2.3]{IR}, $N(A)$ is finite, that is, $G_{m}(A)$ is a finite graph with $N(A)$ vertices. Since $H_A$ is closed under taking negative elements, $G_{m}(A)$ is an undirected graph. Since by \eqref{eq:HA}, $H_A$ is a cyclic subgroup of $(\Z[\z_m]/A)^*$ and $(-\z_m)^{2m} + A = \z_m^{2m} + A = 1+ A$, the order of $H_A$ (that is, $\val(G_{m}(A))$) is a divisor of $2m$. By the definition of $G_{m}(A)$, there is at least one path in $G_{m}(A)$ from $A$ to any $\a + A \in \Z[\z_m]/A$. (For example, if $\a = 2-\z_m + 2\z_m^3$, then the sequence $A, 1+A, 2+A, (2-\z_m)+A, (2-\z_m+\z_m^3)+A, (2-\z_m+2\z_m^3)+A$ gives a path from $A$ to $\a + A$.) Therefore, $G_{m}(A)$ is connected. It is clear that $\val(G_{m}(A)) = 2m$ if and only if $\z_m^i \pm \z_m^j \not \in A$ for $0 \le i < j \le m-1$, or equivalently $1 \pm \z_m^j \not \in A$ for $1 \le j \le m-1$.

(b) Suppose $2 \not \in A$ and $m$ is odd. Then $H_A = \la (-\z_m) + A \ra$. 

\smallskip
\textsf{Case 1:}~There exists an integer $i \ge 1$ such that $\z_m^i + A = 1 + A$. Let $d$ be the smallest integer with this property. If $d$ is even, then $H_A = \{1+A, -\z_m + A, \ldots, \z_m^{d-2} + A, -\z_m^{d-1} + A\}$. Since $-1 + A \in H_A$, we have $-1 + A = -\z_m^{2i+1} + A$ for some $1 \le 2i+1 \le d - 1$, or $-1 + A = \z_m^{2i} + A$ for some $1 \le 2i \le d-2$ (note that $i \ne 0$ as $2 \not \in A$). In the former case we obtain $1 - \z_m^{2i+1} \in A$, which contradicts the assumption that $d$ is the smallest positive integer such that $1 - \z_m^{d} \in A$. In the latter case, $1 + \z_m^{2i} \in A$ and so $(1 + \z_m^{2i}) - (1 - \z_m^{d}) \in A$. This gives $1 + \z_m^{d-2i} \in A$ and hence $(1 + \z_m^{d-2i}) - (1 + \z_m^{2i}) = \z_m^{d-2i} - \z_m^{2i} \in A$, which contradicts the choice of $d$ unless $d - 2i = 2i$. (In fact, if $d - 2i > 2i$, then by $\z_m^{d-2i} - \z_m^{2i} \in A$ we have $1 - \z_m^{d-4i} \in A$, contradicting the minimality of $d$. If $d - 2i < 2i$, then $1 - \z_m^{4i-d} \in A$, which again is a contradiction as $0 < 4i - d \le 2(d-2)-d < d$.) In the case when $d - 2i = 2i$, we have $1 + \z_m^{d/2} \in A$, $H_A = \{\pm(1+A), \pm(-\z_m + A), \ldots, \pm(\z_m^{2i-2} + A), \pm(-\z_m^{2i-1} + A)\}$, and $G_{m}(A)$ has valency $d$. Assume $d$ is odd. Then $H_A = \{\pm(1+A), \pm(-\z_m + A), \ldots, \pm(-\z_m^{d-2} + A), \pm(\z_m^{d-1} + A)\}$. By the minimality of $d$ we have $1+A \ne \z_m^i + A$ for $1 \le i \le d-1$. We have also $1+A \ne -1+A$ as $2 \not \in A$ by our assumption. If $1+A = -\z_m^i + A$ for some $1 \le i \le d-1$, then $1 + \z_m^i \in A$ and so $\z_m^i + \z_m^d = (1 + \z_m^i)-(1 - \z_m^d) \in A$. Thus $1+\z_m^{d-i} \in A$ and therefore $\z_m^i - \z_m^{d-i} \in A$, which contradicts the choice of $d$ as $i \ne d-i$ due to $d$ being odd. 

In summary, we have proved that in Case 1 either (i) $d$ is even, $1 + \z_m^{d/2} \in A$ and $G_{m}(A)$ has valency $d$, or (ii) $d$ is odd and $G_{m}(A)$ has valency $2d$. 

\smallskip
\textsf{Case 2:}~There exists an integer $i \ge 1$ such that $-\z_m^i + A = 1 + A$. Let $d$ be the smallest integer with this property. If $d$ is even, then $H_A = \{\pm(1+A), \pm(-\z_m + A), \ldots, \pm(-\z_m^{d-2} + A), \pm(\z_m^{d-1} + A)\}$. Similar to the proof above, one can show that $G_{m}(A)$ has valency $2d$. If $d$ is odd, then $H_A = \{1+A, -\z_m + A, \ldots, -\z_m^{d-2} + A, \z_m^{d-1} + A\}$. Since $-1 + A \in H_A$, we have $-1 + A = -\z_m^{2i+1} + A$ for some $1 \le 2i+1 \le d - 2$, or $-1 + A = \z_m^{2i} + A$ for some $1 \le 2i \le d-1$ (note that $i \ne 0$ as $2 \not \in A$). The latter case cannot happen as it contradicts the minimality of $d$. In the former case we obtain $\z_m^{2i+1} + \z_m^d \in A$ and so $1 + \z_m^{d - 2i -1} \in A$, which also contradicts the minimality of $d$. 

In summary, in Case 2, $d$ is even and $G_{m}(A)$ has valency $2d$.

We claim that Cases 1 and 2 coexist if and only if (i) in Case 1 occurs. In fact, let $d_1, d_2 \ge 1$ be the smallest integers such that $1-\z_m^{d_1}, 1+\z_m^{d_2} \in A$. Then $\z_m^{d_1}+\z_m^{d_2} \in A$ and so $1+\z_m^{d_2 - d_1}, 1+\z_m^{d_1 - d_2} \in A$. By the minimality of $d_2$, we have $d_1  \ge 2d_2$. We also have $\z_m^{d_2} - \z_m^{d_1 - d_2} = (1+\z_m^{d_2}) - (1+\z_m^{d_1 - d_2}) \in A$ and so $1 - \z_m^{d_1 - 2d_2} \in A$. By the minimality of $d_1$, we have $d_1 = 2d_2$, yielding (i) in Case 1.

(c) Suppose $2 \not \in A$ and $m$ is even. Then $H_A = \la \z_m + A \ra$. Let $d$ be the smallest positive integer such that $\z_m^d + A = 1 + A$. (The existence of $d$ is ensured by the fact that $\z_m^m + A = 1 + A$.) Then $H_A = \{1+A, \z_m + A, \ldots, \z_m^{d-2} + A, \z_m^{d-1} + A\}$ and $-1+A = \z_m^{i} + A$ for some $1 \le i \le d-1$ (note that $i \ne 0$ as $2 \not \in A$). So $\z_m^{i} + \z_m^d \in A$ and $1 + \z_m^{d - i} \in A$, which together with $1 + \z_m^{i} \in A$ implies $\z_m^{i} - \z_m^{d - i} \in A$. This together with the minimality of $d$ implies that $d = 2i$ and hence $H_A = \{\pm(1+A), \pm(\z_m + A), \ldots, \pm(\z_m^{i-1} + A)\}$. It follows that $G_{m}(A)$ has valency $d$. 

(d) Since $H_A$ is a subgroup of $(\Z[\z_m]/A)^*$, the semidirect product 
$$
H:=(\Z[\z_m]/A) \rtimes H_{A}
$$ 
is a well-defined subgroup of $L$. Since $L$ is faithful on $\Z[\z_m]/A$, so is $H$. We claim that $H$ preserves the adjacency and non-adjacency relations of $G_{m}(A)$. In fact, the images of $\a_1 + A, \a_2 + A \in \Z[\z_m]/A$ under $(\b+A, \pm (\z_m^i + A)) \in H$ are $\pm ((\a_1 + \b)\z_m^i + A)$ and $\pm ((\a_2 + \b) \z_m^i + A)$, respectively. Since the difference between these two elements is $\pm ((\a_1 - \a_2) \z_m^i + A)$, it follows from the definition of $G_{m}(A)$ that $\a_1 + A$ and $\a_2 + A$ are adjacent in $G_{m}(A)$ if and only if their images under $(\b+A, \pm (\z_m^i + A))$ are adjacent in $G_{m}(A)$. Therefore, $H$ respects the adjacency and non-adjacency relations of $G_{m}(A)$. Thus $G_{m}(A)$ admits $H$ as a group of automorphisms acting faithfully on the vertex set. The subgroup $\Z[\z_m]/A$ of $H$ is transitive on $\Z[\z_m]/A$ by addition, and so $H$ is transitive on the vertex set of $G_{m}(A)$. In view of (\ref{eq:action}), the image of $A \in \Z[\z_m]/A$ under $(\b+A, \pm (\z_m^i + A))$ is $\pm (\b \z_m^i + A)$, which is equal to $A$ if and only if $\b \in A$. Thus the stabilizer of the vertex $A$ of $G_{m}(A)$ under the action of $H$ is the subgroup 
$$
H^*_A := \{(A, \pm (\z_m^i + A)): 0 \le i \le m-1\}
$$ 
of $H$, which is isomorphic to $H_A$. It follows that this stabilizer is transitive on the neighbourhood $E_m/A$ of $A$ in $G_{m}(A)$, because $(\ve \z_m^i + A)^{(\b+A, \ve' \z_m^j + A)} = \pm (\z_m^{i+j} + A)$ (where $\ve, \ve' = \pm 1$) by (\ref{eq:action}). This together with the vertex-transitivity of $H$ on $\Z[\z_m]/A$ implies that $H$ is transitive on the arc set of $G_{m}(A)$. Moreover, by the orbit-stabiliser lemma, the order of $H$ is equal to $N(A) \cdot |H^*_A| = N(A) \cdot \val(G_{m}(A))$, which is the number of arcs of $G_{m}(A)$. Therefore, $H$ must be regular on the arc set of $G_{m}(A)$.    

(e) In the case when $m$ is odd, $(A, (-\z_m) + A) \in H^*_A$ generates the cyclic group $H^*_A$, fixes setwise the neighbourhood $E_m/A = \{(-\z_m)^i + A: 0 \le i \le 2m-1\}$ of $A$, and permutes the neighbours of $A$ in a cyclic manner by  $((-\z_m)^i + A)^{(A, (-\z_m) + A)} = (-\z_m)^{i+1} + A$, $0 \le i \le 2m-1$. Therefore, $(A, (-\z_m) + A)$ is a complete rotation of $G_{m}(A)$ and hence $G_{m}(A)$ can be 2-cell-embedded on a closed orientable surface as a balanced regular Cayley map. Similarly, when $m$ is even, the same result holds with $(A, \z_m + A)$ a complete rotation of $G_{m}(A)$. 
\qed
\end{proof}

\begin{remark}
\label{rem:rho}
Choose $\z_3 = -\rho := -(1+\sqrt{-3})/2$ so that $\rho^2 - \rho + 1 = 0$. Then $\Z[\rho] = \{x+y\rho: x, y \in \Z\}$ is the ring of {\em Eisenstein-Jacobi integers} with norm defined by $N(x+y\rho) = x^2 + xy + y^2$. It is known that $\Z[\rho]^* = \la \rho \ra = \{\pm \rho^i: i=0,1,2\}$. Since $\Z[\rho]$ is an Euclidean domain, every nonzero ideal of it is a principal ideal $(\a)$, and $G_{3}(\a)$ is precisely the {\em Eisenstein-Jacobi (EJ) graph} $EJ_{\a}$ \cite{MBG}. (Unlike \cite[Definition 19]{MBG}, we do not require $\gcd(a, b) = 1$ in $EJ_{a + b\rho}$. In \cite{MBG} the EJ graph $EJ_{a+b\om}$ was defined as the Cayley graph on (the additive group of) $\Z[\omega]/(a+b\om)$ with respect to $\{\pm (1+(a+b\om)), \pm (\om+(a+b\om)), \pm (\om^2 + (a+b\om))\}$, where $\omega = (-1+\sqrt{-3})/2$. As noted in \cite{FB}, although $EJ_{a+b\om}$ has $a^2 - ab + b^2$ vertices and is different from $EJ_{a+b\rho}$, the family of EJ graphs is the same no matter whether $\Z[\rho]$ or $\Z[\omega]$ is used.)

In the case when $m=4$, we choose $\z_4 = i$ (the imaginary unit) and so $\Z[\z_4]$ is the ring of Gaussian integers $\Z[i]$ with norm defined by $N(x+yi) = x^2 + y^2$. Let $0 \ne \a = a+bi \in \Z[i]$ be such that $N(\a) \ge 5$. Then $G_{4}(\a) = G_{4}^*(\a)$ is exactly the {\em Gaussian network} $G_{\a}$ introduced in \cite{MBG}.
\end{remark}

\section{Perfect codes in cyclotomic graphs}
\label{sec:dom}

Let $\a = \sum_{i=0}^{\phi(m)-1} a_{i} \z_m^{i} \in \Z[\z_m]$, where $a_i \in \Z$. Define \cite{FG}
$$
|\a|:= \sum_{i=0}^{\phi(m)-1} |a_{i}|,
$$
where $|a_i|$ denotes the usual absolute value of $a_i$. This defines an integer-valued weight function on $\Z[\z_m]$, called the \emph{Manhattan weight} \cite{Huber94} on $\Z[\z_m]$. This weight then defines the \emph{Manhattan distance} $|\a-\b|$ between $\a$ and $\b$; this is a distance function because it is nonnegative, symmetric and satisfies the triangle inequality (see \cite{FG}).

Denote $\a + A$ by $\bar{\a}$ for $\a \in \Z[\z_m]$ (and in particular $\bar{0} = 0 + A$). Define 
\be
\label{eq:Anorm}
\|\bar{\a}\| := \min\{|\a - \d|: \d \in A\}.
\ee
Note that both $\bar{\a}$ and $\|\bar{\a}\|$ rely on $A$. Note also that $\|\bar{\a}\|$ is independent of the choice of the representative $\a$ in $\a+A$. It was proved in \cite[Section II-D]{FG} that this defines an integer-valued weight on $\Z[\z_m]/A$, that is, (i) $\|\bar{\a}\| \ge 0$ with equality if and only if $\bar{\a} = \bar{0}$, (ii) $\|\bar{\a}\| = \|-\bar{\a}\|$, and (iii) $\|\bar{\a}+\bar{\b}\| \le \|\bar{\a}\|+\|\bar{\b}\|$. This weight then defines the distance $\|\bar{\a} - \bar{\b}\|$ between $\bar{\a}$ and $\bar{\b}$, called the \emph{Mannheim distance} \cite{FG, Huber94}. (This notion was introduced in \cite{FG} when $A$ is a prime ideal, but it works well for any nonzero ideal $A$ of $\Z[\z_m]$.) 

We now show that the Mannheim distance gives the distance between vertices in $G^*_m(A)$. This observation is crucial for us to understand perfect codes in $G^*_m(A)$. 

\begin{lem}
\label{lem:equiv}
Let $m \ge 2$ be an integer and $A$ a nonzero proper ideal of $\Z[\z_m]$. Then for any $\bar{\a}, \bar{\b} \in \Z[\z_m]/A$ the distance in $G^*_m(A)$ between $\bar{\a}$ and $\bar{\b}$ is equal to $\|\bar{\a} - \bar{\b}\|$.
\end{lem}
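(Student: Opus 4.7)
The plan is to reduce the claim to computing graph distances from the identity vertex, then use the fact that $1, \zeta_m, \ldots, \zeta_m^{\phi(m)-1}$ form a $\Z$-basis of $\Z[\zeta_m]$ to identify the word-length on the generating multiset $\{\pm \zeta_m^i : 0 \le i \le \phi(m)-1\}$ with the Manhattan weight.

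First, since $G^*_m(A)$ is a Cayley graph on the abelian group $\Z[\zeta_m]/A$, it is vertex-transitive and translation by $-\bar\beta$ carries $\bar\beta$ to $\bar 0$ and $\bar\alpha$ to $\bar\alpha - \bar\beta$; hence it suffices to prove $d_{G^*_m(A)}(\bar 0,\bar\gamma)=\|\bar\gamma\|$ for every $\bar\gamma$.

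Next, I would unravel the Cayley-graph distance combinatorially. A walk of length $k$ from $\bar 0$ in $G^*_m(A)$ corresponds to a sequence of generators, so its endpoint is $\bar{\gamma'}$ for some
$$\gamma' = \sum_{j=1}^{k}\varepsilon_j\,\zeta_m^{i_j}, \qquad \varepsilon_j\in\{\pm 1\},\ 0\le i_j\le\phi(m)-1.$$
Consequently,
$$d_{G^*_m(A)}(\bar 0,\bar\gamma)=\min\bigl\{k : \exists\,\varepsilon_j,i_j \text{ with } \gamma+A = \textstyle\sum_{j=1}^{k}\varepsilon_j\,\zeta_m^{i_j}+A\bigr\}.$$

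The key step is to identify this word-length with the Manhattan weight. Given $\gamma'\in\Z[\zeta_m]$, write its unique expansion in the integral basis as $\gamma'=\sum_{i=0}^{\phi(m)-1}a_i\zeta_m^i$. The upper bound is immediate: using $|a_i|$ copies of $\mathrm{sgn}(a_i)\zeta_m^i$ for each $i$ expresses $\gamma'$ as a signed sum of exactly $\sum_i|a_i|=|\gamma'|$ generators. For the lower bound, any expression $\gamma'=\sum_{j=1}^k\varepsilon_j\zeta_m^{i_j}$ yields, after collecting terms, $a_i=\sum_{j:i_j=i}\varepsilon_j$; by the triangle inequality $|a_i|\le\#\{j:i_j=i\}$, and summing over $i$ gives $|\gamma'|\le k$. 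Thus the minimum word-length representing any $\gamma'\in\Z[\zeta_m]$ is exactly $|\gamma'|$.

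Combining these observations,
$$d_{G^*_m(A)}(\bar 0,\bar\gamma)=\min\{|\gamma'|:\gamma'\in\Z[\zeta_m],\ \gamma'+A=\gamma+A\}=\min\{|\gamma-\delta|:\delta\in A\}=\|\bar\gamma\|,$$
which completes the proof after translating back by $\bar\beta$. I don't expect a real obstacle here; the only subtlety is invoking the uniqueness of the $\Z$-basis representation of $\Z[\zeta_m]$ (cited from \cite[Theorem 2.6]{Wash} in the preamble), which is precisely what forces the word-length to coincide with the coefficient $\ell^1$-norm.
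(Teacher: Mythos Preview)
Your proof is correct and follows essentially the same route as the paper's: both reduce to the vertex $\bar 0$, construct a path of length $|\gamma'|$ from the basis expansion of a representative $\gamma'\in\gamma+A$ to get the upper bound, and use the uniqueness of the integral-basis coefficients to get the lower bound. Your presentation is slightly more streamlined in that you frame the Cayley-graph distance directly as a word metric and handle both inequalities in one stroke, whereas the paper first treats the adjacency case separately and then invokes the triangle inequality for $\|\cdot\|$ along a shortest path; but the underlying ideas are identical.
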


\begin{proof}
Since $A \ne \Z[\z_m]$, we have $\z_m^{i} \not \in A$ for each integer $i$. 

We first show that $\bar{\a}$ and $\bar{\b}$ are adjacent in $G^*_m(A)$ if and only if $\|\bar{\a} - \bar{\b}\| = 1$. In fact, if $\bar{\a}$ and $\bar{\b}$ are adjacent in $G^*_m(A)$, then $(\a - \b) \pm \z_m^i \in A$ for some $0 \le i \le \phi(m) - 1$, and hence $\|\bar{\a} - \bar{\b}\| = \min\{|\z_m^i - \d|: \d \in A\} \le |\z_m^i| = 1$. However, $\|\bar{\a} - \bar{\b}\| \ge 1$ as $\bar{\a} \ne \bar{\b}$. Therefore, $\|\bar{\a} - \bar{\b}\| = 1$. Conversely, if $\|\bar{\a} - \bar{\b}\| = 1$, then there exists $\d \in A$ such that $|(\a-\b)-\d| = 1$ and so $(\a-\b)-\d = \pm \z_m^i$ for some $0 \le i \le \phi(m) - 1$, implying that $\bar{\a}$ and $\bar{\b}$ are adjacent in $G^*_m(A)$. 

In general, let $s=\|\bar{\a} - \bar{\b}\|$ and let $t = d(\bar{\a}, \bar{\b})$ be the distance between $\bar{\a}$ and $\bar{\b}$ in $G^*_m(A)$. Let $\bar{\a} = \bar{\a}_0, \bar{\a}_1, \ldots, \bar{\a}_t = \bar{\b}$ be a shortest path in $G^*_m(A)$. Since $\bar{\a}_i$ and $\bar{\a}_{i+1}$ are adjacent in $G^*_m(A)$, we have $\|\bar{\a}_{i}-\bar{\a}_{i+1}\| = 1$ by what we proved in the previous paragraph. Since $\bar{\a} - \bar{\b} = \sum_{i=0}^{t-1} (\bar{\a}_{i}-\bar{\a}_{i+1})$, we obtain $s = \|\bar{\a} - \bar{\b}\| \le t$ by the triangular inequality.

In view of (\ref{eq:Anorm}), there exists $\d \in A$ such that $s = |(\a-\b)-\d|$. So we may write $(\a-\b)-\d = c_{i_1} \z_m^{i_1} + \cdots + c_{i_k} \z_m^{i_k} - (d_{j_1} \z_m^{j_1} + \cdots + d_{j_l} \z_m^{j_l})$, where $i_1, \ldots, i_k, j_1, \ldots, j_l$ are pairwise distinct integers between $0$ and $\phi(m)-1$ and $c_{i_1}, \ldots, c_{i_k}, d_{j_1}, \ldots, d_{j_l}$ are positive integers summing up to $s$. Thus the sequence  
$$
A, \z_m^{i_1}+A, \ldots, c_{i_1} \z_m^{i_1}+A, (c_{i_1} \z_m^{i_1}+\z_m^{i_2})+A, \ldots, (c_{i_1} \z_m^{i_1}+c_{i_2} \z_m^{i_2})+A, \ldots, ((\a-\b)-\d)+A
$$
is a path in $G^*_m(A)$ with length $c_{i_1} + \cdots + c_{i_k} + d_{j_1} + \cdots + d_{j_l} = s$. (In each step of the sequence there is an increase or decrease by some $\z_m^{i_r}$.) Since $((\a-\b)-\d)+A = (\a+A)-(\b+A)$ (as $\d \in A$), this sequence gives a path in $G^*_m(A)$ between $\bar{0}$ and $\bar{\a}-\bar{\b}$ and hence $d(\bar{0}, \bar{\a}-\bar{\b}) \le s$. However, we have $d(\bar{\a}, \bar{\b}) = d(\bar{0}, \bar{\a}-\bar{\b})$ because the additive group of $\Z[\z_m]/A$ is regular on the vertex set $\Z[\z_m]/A$ of $G^*_m(A)$ by addition as a group of automorphisms. Therefore, $t \le s$ and the proof is complete. 
\qed 
\end{proof}

Denote by $B_{t}(\bar{\b})$ and $B_{t}^*(\bar{\b})$ the $t$-neighbourhood of $\bar{\b} \in \Z[\z_m]/A$ in $G_m(A)$ and $G^*_m(A)$, respectively. Since $G_m(A)$ and $G^*_m(A)$ are both vertex-transitive, we have $|B_{t}(\bar{\b})| = |B_{t}(\bar{0})|$ and $|B_{t}^*(\bar{\b})| = |B_{t}^*(\bar{0})|$ for all $\bar{\b} \in \Z[\z_m]/A$. By Lemma  \ref{lem:equiv}, 
$$
B_{t}^*(\bar{\b}) = \{\bar{\g} \in \Z[\z_m]/A: \|\bar{\b} - \bar{\g}\| \le t\}.
$$ 
Note that, if $D$ is an ideal of $\Z[\z_m]$ containing $A$, then $D/A = \{\b + A: \b \in D\}$ is a subring of $\Z[\z_m]/A$. The following result easily follows from Lemma \ref{lem:equiv} and the definition of a perfect code. 

\begin{lem}
\label{lem:ptdom}
Let $m \ge 2$ and $t \ge 1$ be integers, and let $A$ and $D$ be nonzero ideals of $\Z[\z_m]$ such that $A \subseteq D$. Then the following hold:
\begin{itemize}
\item[\rm (a)] 
$D/A$ is a perfect $t$-code in $G^*_m(A)$ if and only if 
$$
|B_{t}^*(\bar{0})| = N(D) 
$$
and 
\be
\label{eq:t}
|\eta - \d| \ge 2t+1
\ee
for any $\d \in A$ and $\eta \in D - A$;
\item[\rm (b)] 
$D/A$ is a perfect $t$-code in $G_m(A)$ only when
$$
|B_{t}(\bar{0})| = N(D) 
$$
and (\ref{eq:t}) holds for any $\d \in A$ and $\eta \in D - A$.
\end{itemize}
\end{lem}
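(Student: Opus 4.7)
The plan is to reduce perfect-$t$-code-ness of the subgroup $D/A$ to two standard ingredients -- a sphere-packing count and a minimum-distance condition -- and then translate graph distance into Manhattan data via Lemma \ref{lem:equiv}.

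First I would record that $D/A$ is a subgroup of the additive group of $\Z[\z_m]/A$ with $|D/A| = N(A)/N(D)$, since the tower law gives $[\Z[\z_m]/A : D/A] = [\Z[\z_m]:D] = N(D)$. Because both $G_m(A)$ and $G^*_m(A)$ are vertex-transitive Cayley graphs, every ball of radius $t$ has the same size; hence the definition of perfect $t$-code is equivalent to the conjunction of the packing equality $|D/A| \cdot |B_t(\bar 0)| = N(A)$ and the separation condition $d(\bar\b_1,\bar\b_2) \ge 2t+1$ for all distinct $\bar\b_1,\bar\b_2 \in D/A$. The packing equality is clearly the same as $|B_t(\bar 0)| = N(D)$. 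Since $D/A$ is a subgroup and the graph is a Cayley graph, the separation condition collapses to $d(\bar 0,\bar\eta) \ge 2t+1$ for every nonzero $\bar\eta \in D/A$, and such $\bar\eta$ are represented precisely by $\eta \in D - A$.

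For part (a), Lemma \ref{lem:equiv} identifies $d_{G^*_m(A)}(\bar 0,\bar\eta) = \|\bar\eta\| = \min\{|\eta - \d|: \d \in A\}$, so the separation condition in $G^*_m(A)$ becomes $\min_{\d \in A}|\eta-\d| \ge 2t+1$, i.e.\ $|\eta-\d|\ge 2t+1$ for every $\d \in A$. Together with the packing equality this yields the if-and-only-if characterization. For part (b), Lemma \ref{lem:equiv} does not apply directly to $G_m(A)$, but $G^*_m(A)$ is a spanning subgraph of $G_m(A)$, so $d_{G_m(A)}(\bar 0,\bar\eta) \le d_{G^*_m(A)}(\bar 0,\bar\eta) = \|\bar\eta\|$. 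If $D/A$ is a perfect $t$-code in $G_m(A)$, then $d_{G_m(A)}(\bar 0,\bar\eta) \ge 2t+1$ for every nonzero $\bar\eta \in D/A$, which forces $\|\bar\eta\| \ge 2t+1$ and hence $|\eta-\d|\ge 2t+1$ for all $\d \in A$; the packing equality is obtained in the same way as before.

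The one point that has to be acknowledged rather than glossed over is why (b) is only a necessary condition: the extra generators $\pm\z_m^i$ with $\phi(m) \le i \le m-1$ present in $G_m(A)$ but not in $G^*_m(A)$ can shortcut paths, so $d_{G_m(A)}$ can be strictly smaller than the Mannheim distance and the above Manhattan estimate need not be tight. Apart from this observation the argument is essentially bookkeeping: the main conceptual step is noticing that the subgroup structure of $D/A$ lets pairwise-distance conditions be replaced by distances from the origin, after which Lemma \ref{lem:equiv} plugs in directly.
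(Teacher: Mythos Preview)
Your proof is correct and follows essentially the same route as the paper: reduce perfect-code-ness to a counting equality plus a pairwise-separation condition, use the subgroup structure to collapse the latter to distances from $\bar{0}$, and then invoke Lemma~\ref{lem:equiv} for $G^*_m(A)$ and the spanning-subgraph inequality $d_{G_m(A)} \le d_{G^*_m(A)}$ for part~(b). The paper phrases the separation step via disjointness of balls ($B_t^*(\bar\b)\cap B_t^*(\bar\g)=\emptyset$ and $B_t^*\subseteq B_t$) rather than directly via distances, but this is merely a cosmetic difference.
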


\begin{proof}
By Lemma \ref{lem:equiv}, we have: $B_{t}^*(\bar{\b}) \cap B_{t}^*(\bar{\g}) = \emptyset$ for distinct $\bar{\b}, \bar{\g} \in D/A$ $\Leftrightarrow$ $\|\bar{\b} - \bar{\g}\| \ge 2t + 1$ for distinct $\bar{\b}, \bar{\g} \in D/A$ $\Leftrightarrow$ $\|\bar{\eta}\| \ge 2t + 1$ for any $\bar{0} \ne \bar{\eta} \in D/A$ $\Leftrightarrow$ $|\eta - \d| \ge 2t+1$ for any $\d \in A$ and $\eta \in D - A$. We have $|D/A| = N(A)/N(D)$ as $(\Z[\z_m]/A)/(D/A) \cong \Z[\z_m]/D$.  

Using the facts above, we have: $D/A$ is a perfect $t$-code in $G^*_m(A)$ $\Leftrightarrow$ $\{B_{t}^*(\bar{\b}): \bar{\b} \in D/A\}$ is a partition of $\Z[\z_m]/A$ $\Leftrightarrow$ $|D/A| \cdot |B_{t}^*(\bar{\b})| = N(A)$ and $B_{t}^*(\bar{\b}) \cap B_{t}^*(\bar{\g}) = \emptyset$ for distinct $\bar{\b}, \bar{\g} \in D/A$ $\Leftrightarrow$ $|B_{t}^*(\bar{0})| = N(D)$ and \eqref{eq:t}  holds for any $\d \in A$ and $\eta \in D - A$. 

Since $G^*_m(A)$ is a spanning subgraph of $G_m(A)$, we have $B_{t}^*(\bar{\b}) \subseteq B_{t}(\bar{\b})$ for any $\bar{\b} \in \Z[\z_m]/A$. Thus we have: $D/A$ is a perfect $t$-code in $G_m(A)$ $\Leftrightarrow$ $\{B_{t}(\bar{\b}): \bar{\b} \in D/A\}$ is a partition of $\Z[\z_m]/A$ $\Leftrightarrow$ $|D/A| \cdot |B_{t}(\bar{\b})| = N(A)$ and $B_{t}(\bar{\b}) \cap B_{t}(\bar{\g}) = \emptyset$ for distinct $\bar{\b}, \bar{\g} \in D/A$ $\Rightarrow$ $|B_{t}(\bar{0})| = N(D)$ and $B_{t}^*(\bar{\b}) \cap B_{t}^*(\bar{\g}) = \emptyset$ for distinct $\bar{\b}, \bar{\g} \in D/A$ $\Rightarrow$ $|B_{t}(\bar{0})| = N(D)$ and \eqref{eq:t}  holds for any $\d \in A$ and $\eta \in D - A$.  
\qed
\end{proof}

The following is an immediate consequence of Lemma \ref{lem:ptdom}.

\begin{cor}
\label{cor:ptdom}
Let $m \ge 2$ and $t \ge 1$ be integers, and let $0 \ne \a, \b \in \Z[\z_m]$ be such that $\b$ divides $\a$. Then the following hold:
\begin{itemize}
\item[\rm (a)] $(\b)/(\a)$ is a perfect $t$-code in $G^*_m(\a)$ if and only if 
\begin{equation}
\label{eq:deg1}
|B_{t}^*(\bar{0})| = N(\b)
\end{equation} 
and 
\begin{equation}
\label{eq:t1}
|\tau \b| \ge 2t+1
\end{equation} 
for any nonzero $\t \in \Z[\z_m]$;
\item[\rm (b)] $(\b)/(\a)$ is a perfect $t$-code in $G_m(\a)$ only when
\begin{equation}
\label{eq:deg2}
|B_{t}(\bar{0})| = N(\b)
\end{equation} 
and (\ref{eq:t1}) holds any nonzero $\t \in \Z[\z_m]$. 
\end{itemize}
\end{cor}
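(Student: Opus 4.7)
The plan is to derive both parts of Corollary \ref{cor:ptdom} by directly specializing Lemma \ref{lem:ptdom} to principal ideals, with $D = (\b)$ and $A = (\a)$. Since $\b$ divides $\a$, we have $(\a) \subseteq (\b)$, so the hypothesis $A \subseteq D$ of the lemma is satisfied.

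First I would translate the cardinality conditions. Using $N((\b)) = |N(\b)| = N(\b)$ for principal ideals, as recorded earlier in Section \ref{sec:cyclo}, the equation $|B_{t}^*(\bar{0})| = N(D)$ in Lemma \ref{lem:ptdom}(a) becomes exactly (\ref{eq:deg1}), and likewise $|B_{t}(\bar{0})| = N(D)$ in Lemma \ref{lem:ptdom}(b) becomes (\ref{eq:deg2}).

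Next I would translate the distance condition $|\eta - \d| \ge 2t+1$ for $\d \in (\a)$ and $\eta \in (\b) \setminus (\a)$. The key observation is that as $\eta$ and $\d$ vary in these ranges, the difference $\eta - \d$ sweeps out exactly $(\b) \setminus (\a)$: indeed $\eta - \d \in (\b)$ since $(\a) \subseteq (\b)$, and $\eta - \d \notin (\a)$ because $\eta \notin (\a)$ while $\d \in (\a)$; conversely every $\xi \in (\b) \setminus (\a)$ arises by taking $\eta = \xi$ and $\d = 0$. Every element of $(\b)$ has the form $\t \b$ with $\t \in \Z[\z_m]$, so the inequality reduces to $|\t \b| \ge 2t+1$ for every $\t \in \Z[\z_m]$ such that $\t \b \notin (\a)$, which is the content of (\ref{eq:t1}).

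Finally I would assemble the pieces: part (a) follows from the iff direction of Lemma \ref{lem:ptdom}(a), and part (b) from the only-if direction of Lemma \ref{lem:ptdom}(b). The whole argument is essentially bookkeeping and no step presents real difficulty. The only point requiring a little care is identifying the precise range of $\t$ in (\ref{eq:t1}): writing $\a = \g \b$, we have $\t \b \in (\a)$ iff $\t \in (\g)$, so the condition in (\ref{eq:t1}) is to be read over $\t \in \Z[\z_m] \setminus (\g)$, which is the substantive subset of nonzero $\t$ to which the inequality must be applied.
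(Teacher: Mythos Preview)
Your derivation is correct and matches the paper, which records the corollary as an immediate consequence of Lemma \ref{lem:ptdom} without further proof: specializing to $D=(\b)$, $A=(\a)$ and invoking $N((\b))=N(\b)$ is precisely the intended argument. Your closing observation that the lemma literally yields (\ref{eq:t1}) only over $\t\notin(\g)$ (where $\a=\g\b$), so that the quantifier ``any nonzero $\t$'' in the corollary is mildly looser than what is strictly established, is a fair reading and is harmless for the downstream applications in Theorems \ref{thm:gaussian} and \ref{thm:EJ}.
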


\begin{remark}
Lemma \ref{lem:ptdom} and Corollary \ref{cor:ptdom} only provide necessary conditions for $D/A$ and $(\b)/(\a)$ to be a perfect $t$-code in $G_m(A)$ and $G_m(\a)$, respectively. We are unable to tell whether these conditions are sufficient due to lack of knowledge of the distance in $G_m(A)$ and $G_m(\a)$. In general, this distance is not the Mannheim distance as observed in \cite{FB, MBG} for $m = 3, 4$. 
\end{remark}

In the special case when $m=3,4$, by using Corollary \ref{cor:ptdom} and the knowledge of the distance in $G_m(A)$ \cite{FB, MBSMG}, we now prove that two sufficient conditions given in \cite{MBG, MBSMG} are also necessary.

As mentioned in Remark \ref{rem:rho}, for $0 \ne \a = a+bi \in \Z[i]$ with $N(\a) \ge 5$, $G_{4}(\a)$ is the Gaussian network $G_{\a}$ introduced in \cite{MBG}. It can be easily seen that 
$G_{\a} \cong G_{i\a^j}$ for any integer $j$. So we may assume $a, b \ge 0$ in subsequent discussion about Gaussian networks. The size of the ball $B_{t}(\bar{0})$ of radius $t$ around $\bar{0} = 0 + (\a)$ in $G_{\a}$ was determined in \cite{MBSMG} for any $t \ge 0$. In particular, it was proved in \cite[Theorems 10-11]{MBSMG} that, if $0 \le a \le b$, then 
\be
\label{eq:4t}
|B_{t}(\bar{0})|=4t,\;\, 0 \le t \le \lfloor (a+b-1)/2 \rfloor. 
\ee
In fact, this formula is also valid when $a > b \ge 0$ as $G_{a+bi} \cong G_{b+ai}$ (see \cite[Section IV]{FB}). It is known that two elements $\b, \g$ of $\Z[i]$ are associates of each other if and only if $\b = i^j \g$ for some integer $j$. Since $\Z[i]$ is a principal ideal domain and associates generate the same principal ideal, any nonzero ideal of $\Z[i]$ is of the form $(c+di)$ or $(c-di)$ for some $c, d \ge 0$.

The `if' part of the following result was proved in \cite[Theorem 18]{MBSMG}, which improved an earlier version \cite[Theorem 14]{MBG} that required the extra condition $\gcd(a, b)=1$. We complete the picture by proving the `only if' part by using Corollary \ref{cor:ptdom}(b). (Note that the condition $t \le \lfloor (a+b-1)/2 \rfloor$ is needed for otherwise $(\b)/(\a)$ may not be a perfect $t$-code in $G_{\a}$.)

\begin{thm}
\label{thm:gaussian}
Let $0 \ne \a = a+bi \in \Z[i]$ (where $a, b \ge 0$), and let $0 \ne \b \in \Z[i]$ be such that $N(\a) \ge 5$ and $\b$ divides $\a$. Let $t$ be an integer between $1$ and $\lfloor (a+b-1)/2 \rfloor$. Then $(\b)/(\a)$ is a perfect $t$-code in $G_{\a}$ if and only if $\b$ is an associate of $t + (t+1)i$ or $t - (t+1)i$. 
\end{thm}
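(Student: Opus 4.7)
The plan is to prove only the ``only if'' direction, since the ``if'' direction is already \cite[Theorem 18]{MBSMG}. Suppose $(\b)/(\a)$ is a perfect $t$-code in $G_\a$. Corollary \ref{cor:ptdom}(b) then supplies two necessary conditions on $\b$: the sphere-count condition $N(\b) = |B_t(\bar{0})|$ and the Manhattan-weight condition $|\tau\b| \ge 2t+1$ for every nonzero $\tau \in \Z[i]$. Feeding the formula \eqref{eq:4t} into the first condition (valid since $1 \le t \le \lfloor (a+b-1)/2 \rfloor$) gives $N(\b) = 2t^2 + 2t + 1 = t^2 + (t+1)^2$. Specializing the second condition to $\tau = 1$ and writing $\b = c + d i$ with $c, d \in \Z$ gives the Manhattan bound $|c| + |d| \ge 2t+1$.

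Next I would solve the resulting two-variable Diophantine system. From $c^2 + d^2 = 2t^2 + 2t + 1$ together with the elementary estimate $(|c|+|d|)^2 \le 2(c^2+d^2) = 4t^2 + 4t + 2 < (2t+2)^2$, we see $|c|+|d| < 2t+2$, which combined with $|c|+|d| \ge 2t+1$ forces the equality $|c|+|d| = 2t+1$. Squaring this and subtracting the norm equation yields $2|c||d| = 2t(t+1)$, so $|c|$ and $|d|$ are the two roots of $u^2 - (2t+1)u + t(t+1) = (u-t)(u-(t+1)) = 0$. Hence $\{|c|, |d|\} = \{t, t+1\}$, which places $\b$ in the eight-element set $\{\pm t \pm (t+1)i,\ \pm(t+1) \pm t i\}$.

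Finally I would verify that these eight elements split into exactly two orbits under multiplication by the unit group $\Z[i]^* = \{\pm 1, \pm i\}$, namely the orbit of $t + (t+1)i$, which is $\{t+(t+1)i,\, -(t+1)+ti,\, -t-(t+1)i,\, (t+1)-ti\}$, and the orbit of $t - (t+1)i$, which is the remaining four elements; these two orbits are disjoint because none of $i^k(t+(t+1)i)$ for $k = 0,1,2,3$ equals $t-(t+1)i$. So $\b$ is an associate of $t+(t+1)i$ or of $t-(t+1)i$, completing the proof. I do not anticipate a serious obstacle: the entire argument reduces to plugging the two conditions of Corollary \ref{cor:ptdom}(b) into the elementary algebra of the middle paragraph, and no further information about distances in $G_\a$ is needed beyond the ball-size formula \eqref{eq:4t}.
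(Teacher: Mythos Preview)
Your proof is correct and follows the same overall strategy as the paper's: invoke Corollary~\ref{cor:ptdom}(b), compute $|B_t(\bar 0)|=2t^2+2t+1$ from \eqref{eq:4t}, and solve the resulting constraints on $\b=c+di$. The difference lies in how the Diophantine system is closed. The paper writes $\b=c\pm di$ with $c,d\ge 0$, tests \emph{two} values of $\tau$ (namely $(f,g)=(1,0)$ and $(f,g)=(1,1)$, the latter yielding $2\max(c,d)\ge 2t+1$), and then splits into the cases $c\ge d$ and $c<d$ before arguing by contradiction that $c+d=2t+1$. You instead test only $\tau=1$ to get $|c|+|d|\ge 2t+1$, and obtain the matching upper bound directly from the norm via the elementary inequality $(|c|+|d|)^2\le 2(c^2+d^2)=4t^2+4t+2<(2t+2)^2$; this sidesteps the case split and the second test value entirely. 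Your route is a bit shorter and more symmetric, while the paper's approach has the minor advantage of making transparent exactly which instances of \eqref{eq:t1} are being used (relevant if one wanted to adapt the argument to other $m$, as in Theorem~\ref{thm:EJ}).
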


\begin{proof}
We only need to prove the necessity. As mentioned above, we may assume $0 \ne \b = c \pm di \in \Z[i]$, where $c, d \ge 0$. Since $\b$ divides $\a$, we have $(\a) \subseteq (\b)$, $\a = \g \b$ for some $\g \in \Z[i]$, and $N(\b) = c^2 + d^2$ divides $N(\a) = a^2 + b^2$. Since $N(\a) \ge 5$, $G_{\a}$ has valency $4$ by Theorem \ref{thm:basic}(c). Since $1 \le t \le \lfloor (a+b-1)/2 \rfloor$, by \eqref{eq:4t}, $|B_{t}(\bar{0})| = 1+4\sum_{j=1}^t j = 2t(t+1)+1$. 

Suppose that $(\b)/(\a)$ is a perfect $t$-code in $G_{\a}$. Then by (\ref{eq:deg2}) we have $c^2 + d^2 = 2t(t+1)+1$, and by (\ref{eq:t1}), $|\t \b| \ge 2t+1$ for any $0 \ne \tau \in \Z[i]$.  Since $|i^j \t \b| = |\t \b|$ for any integer $j$, by multiplying $\tau$ by $i, i^2$ or $i^3$ when necessary, we may assume that $\t = f \pm g i$ where $f, g \ge 0$ with $(f, g) \ne (0, 0)$. Note that $\t \b = (cf \mp dg) + (df \pm cg)i$ when $\b = c+di$ and $\t \b = (cf \pm dg) - (df \mp cg)i$ when $\b = c-di$. In both cases, (\ref{eq:t1}) is equivalent to 
\be
\label{eq:pm1}
|cf - dg| + |df + cg| \ge 2t+1,\quad 
|cf + dg| + |df - cg| \ge 2t+1
\ee
for any integers $f, g \ge 0$ with $(f, g) \ne (0, 0)$. 

Assume $c \ge d$ first. Choosing $(f, g) = (1, 1)$ in (\ref{eq:pm1}), we obtain $2c \ge 2t+1$ and so $c \ge t+1$. This together with $c^2 + d^2 = 2t(t+1)+1$ implies $d \le t$. Choosing $(f,g)=(1,0)$ in (\ref{eq:pm1}), we obtain $c+d \ge 2t+1$. If $c+d > 2t+1$, then $2t(t+1)+1 = c^2 + d^2 > ((2t+1)-d)^2 + d^2$, yielding $0 > (d-t)(d-(t+1))$. However, this cannot happen as $d \le t$. Hence $c+d = 2t+1$. Combining this with $c^2 + d^2 = 2t(t+1)+1$, we obtain $cd = t(t+1)$. Therefore the only possibility is that $c = t+1$ and $d=t$.  

Now assume $c < d$. Setting $(f, g) = (1, 1)$ in (\ref{eq:pm1}), we have $2d \ge 2t+1$ and so $d \ge t+1$. This together with $c^2 + d^2 = 2t(t+1)+1$ implies $c \le t$. Choosing $(f,g)=(1,0)$ in (\ref{eq:pm1}), we obtain $c+d \ge 2t+1$. Similar to the argument above, we then obtain $c = t$ and $d = t+1$. 

We conclude the proof by noting that $(t+1) + ti = i(t-(t+1)i)$ and $(t+1) - ti = i^3 (t+(t+1)i)$. 
\qed
\end{proof} 

\begin{remark}
Theorem \ref{thm:gaussian} can be restated as follows: Let $\b = t \pm (t+1)i$ with $t$ a positive integer. Then for any $\a = (x+yi)\b$ or $(x-yi)\b$, where $x, y \ge 0$, $(x, y) \ne (0, 0)$, $G_{\a}$ has $(\b)/(\a)$ as a perfect $t$-code. Moreover, up to isomorphism these are the only cyclotomic graphs $G_{\g}$ with $\b$ dividing $\g$ that admit $(\b)/(\g)$ as a perfect $t$-code in $G_{\g}$.  
\end{remark}

We now move on to the third cyclotomic graphs $EJ_{\a} = G_{3}(\a)$ (see Remark \ref{rem:rho}), where $0 \ne \a = a+b\rho$ and $\rho = (1+\sqrt{-3})/2$. Since $G_{\a} \cong G_{\rho ^j \a}$ for any integer $j$, without loss of generality we may assume $a, b \ge 0$ in $EJ_{\a}$. The size of the ball $B_{t}(\bar{0})$ of radius $t$ around $\bar{0} = 0 + (\a)$ in $EJ_{\a}$ was determined in \cite{FB} for any $t \ge 0$. In particular, it was proved in \cite[Theorem 27]{FB} that, if $a \ge b \ge 0$, then 
\be
\label{eq:6t}
|B_{t}(\bar{0})|=6t,\;\, 0 \le t < (a+b)/2. 
\ee 
Note that this formula is also valid when $0 \le a < b$ as $G_{a+b\rho} \cong G_{b+a\rho}$ (see \cite[Section IV]{FB}). It is known that two elements $\b, \g$ of $\Z[\rho]$ are associates of each other if and only if $\b = \rho^j \g$ for some integer $j$. Since $\Z[\rho]$ is a principal ideal domain and associates generate the same principal ideal, any nonzero ideal of $\Z[\rho]$ is of the form $(c+d\rho)$ or $(c-d\rho)$ for some integers $c, d \ge 0$. 

In \cite[Section IV]{FB}, the \emph{$\rho$-taxicab norm} of $\g \in \Z[\rho]$ was defined as 
$$
|\g|_{\rho} := \min\{|x|+|y|+|z|: \g = x + y\rho + z \rho^2,\ x, y, z \in \Z\}
$$
and the \emph{EJ-norm} of $\bar{\g} = \g + (\a)$ in $EJ_{\a}$ was defined as
$$
\|\bar{\g}\|_{E} := \min\{|\g - \eta \a|_{\rho}: \eta \in \Z[\rho]\}.
$$
Since $\|\bar{\g}_1\|_{E} = \|\bar{\g}_2\|_{E}$ whenever $\g_1 \equiv \g_2\ \mod \a$, $\|\bar{\g}\|_{E}$ is well-defined. It was proved in \cite[Section IV]{FB} (see also \cite{MBG}) that the distance in $EJ_{\a}$ between $\bar{\b}$ and $\bar{\g}$ is given by $\|\bar{\b}-\bar{\g}\|_{E}$. 

The `if' part of the next result was proved in \cite[Theorem 24]{MBG} under the assumption $\gcd(a, b) = 1$. (Note that in \cite[Theorem 24]{MBG} $\b$ has a different form due to the usage of $\omega = (-1+\sqrt{-3})/2$ there.) We now show that the condition $\gcd(a, b) = 1$ can be removed, by using \cite[Theorem 27]{FB} and the argument in the proof of \cite[Theorem 24]{MBG}. Moreover, by using Corollary \ref{cor:ptdom}(b), we prove that the `only if' part is also true. 

\begin{thm}
\label{thm:EJ}
Let $0 \ne \a = a + b \rho \in \Z[\rho]$ (where $a, b \ge 0$), and let $0 \ne \b \in \Z[\rho]$ be such that $N(\a) \ge 7$ and $\b$ divides $\a$. Let $t$ be an integer between $1$ and $\lfloor (a+b-1)/2 \rfloor$. Then $(\b)/(\a)$ is a perfect $t$-code in $EJ_{\a}$ if and only if $\b$ is an associate of $(t+1)+t \rho$ or $t + (t+1) \rho$.  
\end{thm}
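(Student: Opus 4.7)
The plan is to mirror the structure of Theorem~\ref{thm:gaussian}: necessity follows from Corollary~\ref{cor:ptdom}(b) together with the ball-size identity implicit in \eqref{eq:6t}, and sufficiency is obtained by reusing the argument of \cite[Theorem~24]{MBG}, with \cite[Theorem~27]{FB} (i.e.~\eqref{eq:6t}) in place of the coprime-case ball count used there.

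For the necessity, I would first reduce to $\b = c + d\rho$ with $c, d \in \Z_{\ge 0}$, not both zero. The six associates of $c+d\rho$ under the units $\{\pm\rho^j : 0\le j\le 2\}$ realise the coordinate pairs $(c,d)$, $(-c,-d)$, $(-d, c+d)$, $(d, -(c+d))$, $(-(c+d), c)$, $(c+d, -c)$, and a short sign-case check shows that at least one of them is componentwise nonnegative. From $1 \le t \le \lfloor (a+b-1)/2 \rfloor$ and \eqref{eq:6t}, $|B_t(\bar 0)| = 1 + 6(1+2+\cdots+t) = 3t^2+3t+1$, so Corollary~\ref{cor:ptdom}(b) yields the norm equation $c^2 + cd + d^2 = 3t^2+3t+1$. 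To exploit $|\tau\b|\ge 2t+1$, I would parametrise $\tau = f + g\rho$ and use $\rho^2 = \rho - 1$ to get $\tau\b = (fc - gd) + (fd + gc + gd)\rho$, so that the Manhattan weight is $|fc - gd| + |fd + gc + gd|$. Specialising $(f,g) = (1,0)$ gives $s := c + d \ge 2t+1$, while AM-GM gives $cd \le s^2/4$; substituting $cd = s^2 - (3t^2+3t+1)$ forces $s^2 \le 4t^2 + 4t + 4/3$, hence $s \le 2t+1$ for integer $s$. Combining, $s = 2t+1$ and $cd = t(t+1)$, so $\{c,d\} = \{t, t+1\}$, giving the two claimed shapes for $\b$ up to a unit.

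For sufficiency, $N((t+1)+t\rho) = N(t+(t+1)\rho) = 3t^2+3t+1 = |B_t(\bar 0)|$ by \eqref{eq:6t}, so the sphere-packing count is exact. What remains is to verify that for each $\bar 0 \ne \bar\gamma \in (\b)/(\a)$ the EJ-norm satisfies $\|\bar\gamma\|_E \ge 2t+1$; this is the content of the $\rho$-taxicab lattice argument in \cite[Theorem~24]{MBG}. The hypothesis $\gcd(a,b)=1$ there is invoked only in order to identify the ball size, which is now supplied in full generality by \eqref{eq:6t}; the rest of the argument depends on $\b$ (whose shape is the same in both cases) and not otherwise on the coprimality of $a$ and $b$, so it transfers verbatim.

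The main obstacle will be the bookkeeping for sufficiency: a careful reading of \cite[Theorem~24]{MBG} to confirm that the coprimality of $a$ and $b$ is used nowhere except in the ball-count step. For the necessity, the only mild subtlety is the sign-case reduction to $c, d \ge 0$; once this is in place, the AM-GM pinch on $s = c + d$ closes the argument cleanly.
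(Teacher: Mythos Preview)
Your proposal is correct and follows the paper's overall architecture: necessity via Corollary~\ref{cor:ptdom}(b) plus the ball-size formula~\eqref{eq:6t}, and sufficiency by importing the argument of \cite[Theorem~24]{MBG} with \eqref{eq:6t} replacing the coprime-case ball count. The sufficiency half matches the paper exactly (the paper in fact writes out the $\eta = f + g\rho$ computation in full and confirms that coprimality of $a,b$ plays no role beyond the ball count).

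For necessity your execution is tidier than the paper's. The paper treats $\b = c + d\rho$ and $\b = c - d\rho$ with $c,d \ge 0$ as two separate cases; the second case produces the four auxiliary solutions $(c,d) = (2t+1,t+1)$, $(2t+1,t)$, $(t+1,2t+1)$, $(t,2t+1)$, which are then individually checked to be associates of the Case-1 solutions. Your reduction via the unit action to a single representative $c + d\rho$ with $c,d \ge 0$ eliminates Case~2 outright. Within Case~1 the paper splits further on $c \ge d$ versus $c < d$, uses the quadratic formula to bound one coordinate by $t$, and rules out $c+d > 2t+1$ by deriving the contradiction $(d-t)(d-(t+1)) < 0$; your AM-GM pinch on $s = c+d$ (yielding $s^2 \le 4t^2+4t+4/3$ directly from $cd = s^2 - (3t^2+3t+1) \le s^2/4$) accomplishes the same thing in one stroke with no subcases. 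Both routes land on $s = 2t+1$, $cd = t(t+1)$, hence $\{c,d\} = \{t,t+1\}$.
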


\begin{proof}
As noted above, we may assume $0 \ne \b = c \pm d \rho \in \Z[\rho]$, where $c, d \ge 0$. Since $\b$ divides $\a$, we have $(\a) \subseteq (\b)$, $\a = \g \b$ for some $\g \in \Z[i]$, and $N(\b) = c^2 \pm cd + d^2$ divides $N(\a) = a^2 + ab + b^2$. Since $N(\a) \ge 7$, $EJ_{\a}$ has valency $6$ by Theorem \ref{thm:basic}(b). Since $1 \le t \le (a+b-1)/2$, by \eqref{eq:6t}, $|B_{t}(\bar{0})| = 1+6\sum_{j=1}^t j = 3t(t+1)+1$. 

\smallskip
\textsf{Necessity.}~Suppose that $(\b)/(\a)$ is a perfect $t$-code in $EJ_{\a}$. Then by (\ref{eq:deg2}), $c^2 \pm cd + d^2 = 3t(t+1)+1$, and by (\ref{eq:t1}), $|\tau \b| \ge 2t+1$ for every $0 \ne \tau \in \Z[\rho]$. Since $|\rho^j \tau \b| = |\tau \b|$ for any integer $j$, multiplying $\tau$ by an appropriate $\rho^j$ when necessary we may assume $\t = f \pm g \rho$, where $f, g \ge 0$ with $(f, g) \ne (0, 0)$. Note that $\t \b = cf + (df \pm cg)\rho \pm dg\rho^2 = (cf \mp dg) + (df \pm (c+d)g)\rho$ when $\b = c + d\rho$, and $\t \b = cf - (df \mp cg)\rho \mp dg\rho^2 = (cf \pm dg) - (df \mp (c-d)g)\rho$ when $\b = c - d\rho$. 

\smallskip
\textsf{Case 1: $\b = c + d\rho$.}~In this case, $c^2 + cd + d^2 = 3t(t+1)+1$ by (\ref{eq:deg2}), and (\ref{eq:t1}) is equivalent to
\be
\label{eq:pm2}
|cf - dg| + |df + (c+d)g| \ge 2t+1, \quad |cf + dg| + |df - (c+d)g| \ge 2t+1
\ee 
for any integers $f, g \ge 0$ with $(f, g) \ne (0, 0)$. Setting $(f, g) = (1, 0)$, we obtain $c+d \ge 2t+1$. 

Assume $c \ge d$ first. In this case we have $c \ge t+1$ as $c+d \ge 2t+1$. This together with $c^2 + cd + d^2 = 3t(t+1)+1$ implies 
$$
\begin{array}{lll}
d & = & \frac{1}{2} \left(-c+\sqrt{4(3t(t+1)+1)-3c^2}\right) \\
   & \le & \frac{1}{2} \left(-(t+1)+\sqrt{4(3t(t+1)+1)-3(t+1)^2}\right) \\
   & = & t.
\end{array}
$$ 
If $c+d > 2t+1$, then $3t(t+1)+1 = c^2 + cd + d^2 > ((2t+1)-d)^2 + ((2t+1)-d)d + d^2$, yielding $0 > (d-t)(d-(t+1))$. Since this contradicts the fact $d \le t$, we must have $c+d = 2t+1$. This together with $c^2 + cd + d^2 = 3t(t+1)+1$ implies $cd = t(t+1)$. Therefore, $(c, d) = (t+1, t)$. 

Now assume $c < d$. Then $2d > c + d \ge 2t+1$ and so $d \ge t+1$. Similar to the previous paragraph, we then have $c \le t$ and based on this we can further prove that $(c, d) = (t, t+1)$. 

\smallskip
\textsf{Case 2: $\b = c - d\rho$.}~In this case, $c^2 - cd + d^2 = 3t(t+1)+1$ by (\ref{eq:deg2}), and (\ref{eq:t1}) is equivalent to
\be
\label{eq:pm3}
|cf + dg| + |df - (c-d)g| \ge 2t+1,\quad |cf - dg| + |df + (c-d)g| \ge 2t+1
\ee 
for any integers $f, g \ge 0$ with $(f, g) \ne (0, 0)$. 

Assume $c \ge d$ first. Since $c^2 - cd + d^2 = 3t(t+1)+1$, we have
\be
\label{eq:d}
d = \frac{1}{2} \left(c \pm \sqrt{4(3t(t+1)+1)-3c^2}\right).  
\ee
Since $d$ is a real number, we have $3c^2 \le 4(3t(t+1)+1) = 3(2t+1)^2 + 1$, which implies $c \le 2t+1$. On the other hand, setting $(f, g) = (0, 1)$ in \eqref{eq:pm3}, we obtain $c = d + |c-d| \ge 2t+1$. Hence $c = 2t+1$. Plugging this into \eqref{eq:d}, we obtain $d = t+1$ or $t$. Therefore, $(c, d) = (2t+1, t+1)$ or $(2t+1, t)$. 

Next assume $c < d$. Similar to \eqref{eq:pm3}, we have
\be
\label{eq:c}
c = \frac{1}{2} \left(d \pm \sqrt{4(3t(t+1)+1)-3d^2}\right), 
\ee
which implies $d \le 2t+1$. On the other hand, setting $(f, g) = (1, 1)$ in \eqref{eq:pm3}, we obtain $d = |c-d|+|d+(c-d)| \ge 2t+1$. Hence $d = 2t+1$. Plugging this into \eqref{eq:c}, we obtain $c = t+1$ or $t$. Therefore, $(c, d) = (t+1, 2t+1)$ or $(t, 2t+1)$.

It can be verified that $(2t+1) - (t+1)\rho = \rho^5[(t+1)+t\rho]$, $(2t+1) - t\rho = \rho^5[t+(t+1)\rho]$, $(t+1) - (2t+1)\rho = \rho^4 [t+(t+1)\rho]$ and $t - (2t+1)\rho = \rho^4 [(t+1)+t\rho]$. So the ideals $(\b)$ in Case 2 give rise to the same perfect $t$-codes as in Case 1.

\smallskip
\textsf{Sufficiency:}~We use essentially the same argument as in the proof of \cite[Theorem 24]{MBG}, but we do not require $\gcd(a, b) = 1$. Suppose first that $\b = (t+1)+t\rho$ divides $\a$. We aim to prove that $(\b)/(\a)$ is a perfect $t$-code in $EJ_{\a}$. Since $|B_{t}(\bar{0})| = N(\b) = 3t(t+1)+1$, it suffices to prove that the distance $\|\bar{\g}-\bar{\d}\|_{E}$ in $EJ_{\a}$ between any two vertices $\bar{\g},  \bar{\d} \in (\b)/(\a)$ is at least $2t+1$ (see the proof of Lemma \ref{lem:ptdom}), or equivalently, $\|\overline{\g \b}\|_{E} \ge 2t+1$ for any $0 \ne \g \in \Z[\rho]$. Suppose otherwise. Since $\b$ divides $\a$, there exist $0 \ne \eta \in \Z[\rho]$ and integers $x, y, z$ such that $\eta \b = x+y\rho + z \rho^2$ and $\|\overline{\g\b}\|_{E} = |x| + |y| + |z| \le 2t$. Set $\eta = f + g \rho$, where $(f, g) \ne (0, 0)$ are integers. Then $\eta \b = (f(t+1)-gt) + (ft+g(2t+1))\rho$. On the other hand, we have $\eta \b = (x-z)+(y+z)\rho$. Hence $x-z = f(t+1)-gt$, $y+z = ft+g(2t+1)$ and $x+y = f(2t+1)+g(t+1)$. It follows that $|x|+|z| \ge |f(t+1)-gt|$, $|y| + |z| \ge |ft+g(2t+1)|$ and $|x| + |y| \ge |f(2t+1)+g(t+1)|$. Thus, if $|f| < |g|$, then $|y|+|z| \ge |g(2t+1)| - |ft| \ge (|f|+1)(2t+1) - |f|t \ge 2t+1$. Similarly, if $|f| > |g|$, then $|x| + |y| \ge 2t+1$. Moreover, if $f = g \ne 0$ then $|x|+|y| \ge 2t+1$, and if $f = -g \ne 0$ then $|x|+|z| \ge 2t+1$. In any case, we have $|x|+|y|+|z| \ge 2t+1$, a contradiction. Therefore, the distance in $EJ_{\a}$ between any two distinct vertices of $(\b)/(\a)$ is at least $2t+1$. Consequently, the balls $B_{t}(\bar{\g})$, $\bar{\g} \in (\b)/(\a)$ are pairwise disjoint. However, there are $N(\a)/N(\b)$ such balls and each of them has size $N(\b)$. Therefore, these balls form a partition of the vertex set $\Z[\rho]/(\a)$ of $EJ_{\a}$. That is, $(\b)/(\a)$ is a perfect $t$-code in $EJ_{\a}$.  

It can be verified that, for $\b = t+(t+1)\rho$ and $\eta = f+g\rho$, we have $\eta \b = (ft-g(t+1)) + (f(t+1)+g(2t+1))\rho$. Using this and a similar argument as above, one can show that $(\b)/(\a)$ is a perfect $t$-code in $EJ_{\a}$ provided that $\b$ divides $\a$.
\qed
\end{proof}

\section{Circulant cyclotomic graphs}
\label{sec:cir-cyclo}

In this section we present a family of circulant cyclotomic graphs of valency twice an odd prime, namely $2p$-valent first kind Frobenius circulants. We give a classification of all such graphs in Theorem \ref{thm:2p} and then prove that they are indeed cyclotomic in Theorem \ref{cor:cir-cyclo}. 

Since $\Z[\z_m]$ is a $\Z$-module with integral basis $1, \z_m, \ldots, \z_m^{\phi(m)-1}$, we may write 
\be
\label{eq:phi}
\z_m^i = \sum_{j=0}^{\phi(m)-1}c_{ij}\z_m^j,\; 0 \le i \le m-1, 
\ee
where all $c_{ij} \in \Z$ are determined by $\z_m$. Note that, for $0 \le i \le \phi(m)-1$, we have $c_{ii} = 1$ and $c_{ij} = 0$ when $i\ne j$. The next result gives a construction of circulant cyclotomic graphs. 

\begin{lem}
\label{thm:cir-cyclo}
Let $m \ge 2$ and $n \ge 3$ be odd integers, and let $c_{ij}$ be defined by (\ref{eq:phi}). Suppose that $a$ is a positive integer such that 
\be
\label{eq:a}
a^i \equiv \sum_{j=0}^{\phi(m)-1}c_{ij} a^j\; \mod n,\;\, \phi(m) \le i \le m-1  
\ee
and $a^{m} \equiv 1~\mod n$ but $a^{i} \not \equiv \pm 1~\mod n$ for $1 \le i \le m-1$. 
Then $\Cay(\Z_n, \la [-a] \ra) \cong G_m(A_{m,n,a})$, where
\be
\label{eq:Amna}
A_{m,n,a} := \left\{\sum_{i=0}^{\phi(m)-1} a_{i} \z_m^{i} \in \Z[\z_m]: \sum_{i=0}^{\phi(m)-1} a_{i} a^{i} \equiv 0~\mod n\right\}.
\ee 
\end{lem}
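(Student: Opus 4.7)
The plan is to construct an explicit surjective ring homomorphism $\varphi\colon \Z[\z_m] \to \Z_n$ with kernel $A_{m,n,a}$, and then verify that the induced additive-group isomorphism $\bar\varphi\colon \Z[\z_m]/A_{m,n,a} \to \Z_n$ carries the connection set $E_m/A_{m,n,a}$ bijectively onto $\la [-a]\ra$; this will automatically promote $\bar\varphi$ to an isomorphism of Cayley graphs.

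First I would define $\varphi$ on the $\Z$-basis $1, \z_m, \ldots, \z_m^{\phi(m)-1}$ by $\sum_{i=0}^{\phi(m)-1} a_i \z_m^i \mapsto [\sum_{i=0}^{\phi(m)-1} a_i a^i]$. Using the identification $\Z[\z_m] \cong \Z[x]/(\Phi_m(x))$, the evaluation $x \mapsto [a]$ descends through $\Phi_m(x)$ iff $\Phi_m(a) \equiv 0 \pmod n$; the relation $\z_m^{\phi(m)} = \sum_{j=0}^{\phi(m)-1} c_{\phi(m),j}\z_m^j$ coming from (\ref{eq:phi}) is (up to sign of coefficients) exactly the minimal polynomial of $\z_m$, so the case $i = \phi(m)$ of hypothesis (\ref{eq:a}) is equivalent to $\Phi_m(a) \equiv 0 \pmod n$. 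The remaining equations in (\ref{eq:a}) for $\phi(m) < i \le m-1$ then follow automatically by iterating the relation, though their explicit form is what makes the definition of $A_{m,n,a}$ transparent. Since $[1]$ lies in the image, $\varphi$ is surjective, and by construction $\ker \varphi = A_{m,n,a}$; the first isomorphism theorem then yields the additive (indeed ring) isomorphism $\bar\varphi$.

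Next I would chase the connection set. By construction $\bar\varphi$ sends $E_m/A_{m,n,a} = \{\pm(\z_m^i+A_{m,n,a}) : 0\le i\le m-1\}$ onto $\{\pm[a^i] : 0\le i\le m-1\}$, so the remaining task is to identify this image with $\la [-a]\ra$. Since $n$ is odd and $m$ is odd with $a^m \equiv 1 \pmod n$, one checks $[-a]^m = -[1] \ne [1]$ and $[-a]^{2m} = [1]$, so $[-a]$ has order exactly $2m$ in $\Z_n^*$; splitting powers by parity then gives $\la [-a]\ra = \{\pm[a^i] : 0 \le i \le m-1\}$. The hypothesis $a^i \not\equiv \pm 1 \pmod n$ for $1 \le i \le m-1$ guarantees these $2m$ elements are pairwise distinct, matching $|E_m/A_{m,n,a}|$, so $\bar\varphi$ restricts to a bijection between the two connection sets and the graph isomorphism follows.

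The main obstacle — really the only step that is not routine — is the well-definedness of $\varphi$, where one must unpack that the $i=\phi(m)$ case of (\ref{eq:a}) encodes precisely the relation $\Phi_m(a) \equiv 0 \pmod n$ needed to descend from $\Z[x]$ to $\Z[\z_m]$. Once that is in place, the first isomorphism theorem and the short order-of-$[-a]$ calculation finish the proof.
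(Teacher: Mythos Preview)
Your proof is correct and follows essentially the same approach as the paper: construct the evaluation homomorphism $\z_m \mapsto [a]$, identify its kernel as $A_{m,n,a}$, and check that the induced isomorphism carries $E_m/A_{m,n,a}$ bijectively onto $\la[-a]\ra$. The only difference is that you establish the ring-homomorphism property conceptually via $\Z[\z_m]\cong\Z[x]/(\Phi_m(x))$ and the observation that the $i=\phi(m)$ case of (\ref{eq:a}) is exactly $\Phi_m(a)\equiv 0\pmod n$, whereas the paper verifies multiplicativity by a direct computation using all of (\ref{eq:a}) together with $a^m\equiv 1\pmod n$.
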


\begin{proof}
Define 
\be
\label{eq:f}
f\left(\sum_{i=0}^{\phi(m)-1} a_{i} \z_m^{i}\right) = \sum_{i=0}^{\phi(m)-1} a_{i} a^{i}\;\, \mod n,\;\, a_i \in \Z.
\ee
Since $1, \z_m, \ldots, \z_m^{\phi(m)-1}$ is an integral basis for the $\Z$-module $\Z[\z_m]$, $f$ is a well-defined mapping from $\Z[\z_m]$ to $\Z_n$. Obviously, $f$ is surjective. Using (\ref{eq:phi})-(\ref{eq:f}), one can verify that $f\left(\sum_{i=0}^{k} a_{i} \z_m^{i}\right) = \sum_{i=0}^{k} a_{i} a^{i}$, $0 \le k \le n-1$. This can be easily extended to arbitrary $k$, that is, for any $k \ge 0$,  
\be
\label{eq:hom}
f\left(\sum_{i=0}^{k} a_{i} \z_m^{i}\right) = \sum_{i=0}^{k} a_{i} a^{i}\;\, \mod n.
\ee 

We claim that $f$ is a ring homomorphism from $\Z[\z_m]$ to $\Z_n$. In fact, for $\a = \sum_{i=0}^{m-1} a_{i} \z_m^{i} \in \Z[\z_m]$ and $\b = \sum_{i=0}^{m-1} b_{i} \z_m^{i} \in \Z[\z_m]$, it is evident that $f(\a+\b) = f(\a) + f(\b)$. Since $\z_m^m = 1$, we have 
$$
\a \b = \sum_{k=0}^{m-1} \left(\sum_{i+j=k} a_i b_j\right) \z_m^{k} + 
\sum_{k=m}^{2(m-1)} \left(\sum_{i+j=k} a_i b_j\right) \z_m^{k-m}.
$$
Thus, by (\ref{eq:hom}) and the assumption $a^{m} \equiv 1~\mod n$,  
\bean
f(\a \b) & = & \sum_{k=0}^{m-1} \left(\sum_{i+j=k} a_i b_j\right) a^{k} + 
\sum_{k=m}^{2(m-1)} \left(\sum_{i+j=k} a_i b_j\right) a^{k-m}\; \mod n \\
& = & \left(\sum_{i=0}^{m-1} a_{i} a^{i}\right) \left(\sum_{i=0}^{m-1} b_{i} a^{i}\right)\; \mod n\\
& = & f(\a) f(\b).
\eean
Therefore, $f$ is a surjective homomorphism from $\Z[\z_m]$ to $\Z_n$.  

The kernel of $f$ is exactly $A = A_{m,n,a}$ as defined in \eqref{eq:Amna}. By the homomorphism theorem for rings, we have $\Z[\z_m]/A \cong \Z_n$ and $\bar{f}(x + A) := f(x)$, $x \in \Z[\z_m]$ defines the corresponding isomorphism from $\Z[\z_m]/A$ to $\Z_n$. Since $f(\z_m^{i}) = a^i\; \mod n$ by (\ref{eq:hom}), $\bar{f}$ maps $\pm (\z_m^i + A)$ to $\pm [a^i]$, $0 \le i \le m-1$. Since $m$ is odd, it follows that the subset $E_{m}/A$ of $\Z[\z_m]/A$ defined in (\ref{eq:pma}) with respect to $A$ above is the pre-image of $\la [-a] \ra = \{\pm [a^i]: 0 \le i \le m-1\} \le \Z_n^*$ under $\bar{f}$. Since $n$ is odd and $a^{m} \equiv 1~\mod n$, $n$ is not a divisor of $2a^i$ for $0 \le i \le m-1$. This together with the assumption $a^{i} \not \equiv \pm 1~\mod n$, $1 \le i \le m-1$ implies that $\la [-a] \ra$ has order $2m$. Therefore, $E_{m}/A$ has size $2m$ and $\bar{f}$ gives a bijection from $E_{m}/A$ to $\la [-a] \ra$. In other words, $G_{m}(A)$ has valency $2m$. It is readily seen that $\bar{f}$ gives rise to an isomorphism from $G_{m}(A)$ to $\Cay(\Z_n, \la [-a] \ra)$. 
\qed
\end{proof}

\begin{lem} 
\label{semiregular}
(\cite[Lemma 4]{TZ})
Let $n \ge 3$ be an integer. A subgroup $H$ of $\Z_n^*$ is semiregular on $\Z_n \setminus \{[0]\}$ if and only if $[h-1] \in \Z_n^*$ for all $[h] \in H \setminus \{[1]\}$.
\end{lem}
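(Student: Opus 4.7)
The plan is to unpack the definition of semiregularity directly and reduce it to an elementary gcd condition. Recall that $H$ acts on $\Z_n$ by right multiplication, and this action is \emph{semiregular} on $\Z_n \setminus \{[0]\}$ precisely when the stabilizer (in $H$) of every point of $\Z_n \setminus \{[0]\}$ is trivial. So my first step would be to translate the fixed-point condition into modular arithmetic: for $[h] \in H$ and $[x] \in \Z_n$, the element $[h]$ fixes $[x]$ if and only if $hx \equiv x \pmod{n}$, i.e. $(h-1)x \equiv 0 \pmod{n}$.

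With this translation in hand, the next step is the following observation: for a fixed $[h] \in H \setminus \{[1]\}$, the congruence $(h-1)x \equiv 0 \pmod{n}$ has a nonzero solution in $\Z_n$ if and only if $\gcd(h-1, n) > 1$. The forward direction is the routine check that if $d := \gcd(h-1, n) > 1$, then $[x] := [n/d]$ is a nonzero solution; for the converse, if $\gcd(h-1, n) = 1$ then $[h-1]$ is a unit in $\Z_n^*$, and multiplying $(h-1)x \equiv 0 \pmod{n}$ by its inverse forces $[x] = [0]$.

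Putting the two steps together, $H$ is semiregular on $\Z_n \setminus \{[0]\}$ iff no $[h] \in H \setminus \{[1]\}$ fixes any nonzero element iff $\gcd(h-1, n) = 1$ for every such $[h]$ iff $[h-1] \in \Z_n^*$ for every $[h] \in H \setminus \{[1]\}$, which is exactly the claimed criterion. I do not anticipate any genuine obstacle: the whole argument is a one-line calculation once the right definitions are in view, and the only thing to keep straight is which modular arithmetic statement corresponds to which group-theoretic property.
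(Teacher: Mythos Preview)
Your argument is correct. Note, however, that the paper does not actually prove this lemma: it is quoted verbatim from \cite[Lemma~4]{TZ} without proof, so there is nothing in the present paper to compare against. Your unpacking of semiregularity via the congruence $(h-1)x \equiv 0 \pmod n$ and the standard $\gcd$ criterion for the existence of a nonzero solution is exactly the natural (and essentially only) way to prove this statement, and would be the expected argument in \cite{TZ} as well.
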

 
\begin{thm}
\label{thm:2p}
Let $p$ be an odd prime and $n \ge 2p+1$ an integer. Then a $2p$-valent circulant $\Cay(\Z_n, S)$ with $[1] \in S$ is a first kind Frobenius graph with cyclic kernel if and only if $n \equiv 1~\mod 2p$ and $S = \la [a] \ra$ for some positive integer $a$ such that $a^{p}+1 \equiv 0~\mod n$ and $\gcd(a^i \pm 1, n) = 1$ for $1 \le i \le p-1$. Moreover, in this case $\Cay(\Z_n, \la [a] \ra)$ is a $\Z_n \rtimes \la [a] \ra$-arc transitive first kind $\Z_n \rtimes \la [a] \ra$-Frobenius circulant.   
\end{thm}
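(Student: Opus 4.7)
The plan is to identify the would-be Frobenius stabilizer $H$ with a subgroup of $\Aut(\Z_n) = \Z_n^*$ acting by multiplication, and then convert the Frobenius (semiregularity) condition into congruence conditions on $a$ via Lemma \ref{semiregular}. If $\Cay(\Z_n, S)$ is a first kind Frobenius graph with kernel $\Z_n$, then $H \le \Z_n^*$ and $S$ is the $H$-orbit of the element prescribed in the definition from Section \ref{sec:def}. Taking this element to be $[1] \in S$ gives $S = H$ (as a subset of $\Z_n$). Since $|S| = 2p$ with $p$ an odd prime, $H$ has order $2p$, and every group of order $2p$ is cyclic, so $H = \la [a] \ra$ for some $a$ of multiplicative order $2p$ modulo $n$. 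Moreover, $\la S \ra = \Z_n$ is automatic from $[1] \in S$, and $|H| = 2p$ is even, so the defining conditions for a first kind Frobenius graph reduce to checking that $G = \Z_n \rtimes \la [a] \ra$ is Frobenius.

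For necessity, semiregularity of $H$ on $\Z_n \setminus \{[0]\}$ immediately gives $|H| = 2p \mid n-1$, hence $n \equiv 1 \pmod{2p}$. By Lemma \ref{semiregular} we have $\gcd(a^i - 1, n) = 1$ for every $1 \le i \le 2p-1$. Applied to the unique involution $[a]^p$ in $H$ together with $a^{2p} \equiv 1 \pmod n$, this pins down $a^p \equiv -1 \pmod n$; a short parity argument (if $n$ is even, $\gcd(a-1, n) = 1$ forces $a$ even, whereas $a^p + 1 \equiv 0 \pmod n$ then forces $a^p$ odd, a contradiction) shows $n$ is odd. Finally, for $p+1 \le i \le 2p-1$, writing $i = p+j$ with $1 \le j \le p-1$ and using $a^i \equiv -a^j \pmod n$ gives $\gcd(a^i - 1, n) = \gcd(a^j + 1, n)$, collapsing the $2p-1$ semiregularity conditions into $\gcd(a^i \pm 1, n) = 1$ for $1 \le i \le p-1$.

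For sufficiency, I would first check that $[a]$ has order exactly $2p$ in $\Z_n^*$: orders $1$ and $2$ are excluded because they would give $\gcd(a-1, n) = n$ or $\gcd(a+1, n) = n$ respectively, both contradicting the hypothesis $\gcd(a^i \pm 1, n) = 1$ for $i = 1$ (note $p \ge 3$ and $n \ge 2p+1 > 1$), and order $p$ combined with $a^p \equiv -1 \pmod n$ would force $n \mid 2$, which is impossible. Hence $|H| = |S| = 2p$. The argument of the previous paragraph runs in reverse to verify the hypothesis of Lemma \ref{semiregular}, so $\la [a] \ra$ acts semiregularly on $\Z_n \setminus \{[0]\}$ and $\Z_n \rtimes \la [a] \ra$ is a Frobenius group with cyclic kernel $\Z_n$; together with $[1] \in S$ and $|H|$ even, this makes $\Cay(\Z_n, \la [a] \ra)$ a first kind Frobenius graph. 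The moreover clause is then immediate: $\la [a] \ra$ fixes $[0]$ and acts transitively on $S = \la [a] \ra$ by multiplication, which combined with the regularity of $\Z_n$ on the vertex set yields $\Z_n \rtimes \la [a] \ra$-arc-transitivity. The only mildly subtle step is forcing the unique involution in $H$ to equal $[-1]$, but this is precisely where Lemma \ref{semiregular} applied to $[a]^p$ does the work.
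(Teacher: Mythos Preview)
Your proposal is correct and follows essentially the same route as the paper: identify $H$ with a subgroup of $\Z_n^*$, observe $S = H$ via the orbit of $[1]$, deduce $H$ is cyclic of order $2p$, and translate semiregularity into the $\gcd$ conditions using Lemma~\ref{semiregular}. Two small remarks: your claim that ``every group of order $2p$ is cyclic'' is false as stated (dihedral groups exist); what you need, and what the paper uses, is that $H \le \Z_n^*$ is \emph{abelian}, and abelian groups of order $2p$ are cyclic. Also, your separate parity argument for $n$ odd is redundant once you have $n \equiv 1 \pmod{2p}$. The paper derives $a^p \equiv -1$ slightly differently, by noting that $-[1] \in S = H$ forces $[a^i] = -[1]$ for some $i$ and then pinning $i = p$ via $2p \mid 2i$; your route through $\gcd(a^p - 1, n) = 1$ and $a^{2p} \equiv 1$ is equally valid.
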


\begin{proof}
Let $\Cay(\Z_n, S)$ be a first kind Frobenius circulant with order $n$ such that $[1] \in S$ and the kernel of the underlying Frobenius group is $\Z_n$. Then there exists a subgroup $H$ of $\Z_n^*$ such that $|H| = 2p$, $\Z_n \rtimes H$ is a Frobenius group and $\Cay(\Z_n, S)$ is a first kind $\Z_n \rtimes H$-Frobenius circulant. Thus $H$ is semiregular on $\Z_n \setminus \{[0]\}$, and in particular $n \equiv 1~\mod 2p$. Moreover, $S$ is an $H$-orbit on $\Z_n$ and hence $H$ is regular on $S$. Since $[1] \in S$, it follows that $S = H$. Since $H$ is Abelian with $|H| = 2p$, it is a cyclic group of order $2p$, as an Abelian group of order $2p$ must be cyclic. So we may assume $H = \la [a] \ra = \{[a^i]: 0 \le i \le 2p-1\}$, where $[a]$ is an element of $\Z_n^*$ with order $2p$. Since $[1] \in S$ and $S$ is closed under taking negative elements, we have $-[1] \in S = H$ and so there exists $i$ with $2 \le i \le 2p-1$ such that $[a^i] = -[1]$ (note that $[a] \ne -[1]$ as $[a]$ has order $2p > 2$ in $\Z_n^*$). Thus $[a^{2i}] = [1]$ and so $2p$ divides $2i$. Since $p$ is a prime, we have $i = p$ and therefore $a^p + 1 \equiv 0~\mod n$ (so that $H = \{\pm [1], \pm [a], \pm [a^2], \ldots, \pm [a^{p-1}]\}$). Since $H$ is semiregular on $\Z_n \setminus \{[0]\}$, by Lemma \ref{semiregular}, the integers $a^i \pm 1$ are all coprime to $n$ for $1 \le i \le p-1$. 

Conversely, if $n \equiv 1~\mod 2p$ and $a$ is a positive integer such that $a^{p}+1 \equiv 0~\mod n$ and $a^i \pm 1$, $1 \le i \le p-1$ are coprime to $n$, then $H = \la [a] \ra \le \Z_n^*$ is semiregular on $\Z_n \setminus \{[0]\}$ with order $|H| = 2p$. Therefore, $\Z_n \rtimes H$ is a Frobenius group and $\Cay(\Z_n, \la [a] \ra)$ is a first kind $\Z_n \rtimes H$-Frobenius graph. Moreover, $\Cay(\Z_n, \la [a] \ra)$ is $\Z_n \rtimes H$-arc-transitive by \cite[Lemma 2.1]{Z}.
\qed
\end{proof} 

\begin{remark}
Since $a^p + 1 = (a+1)\sum_{i=0}^{p-1} (-1)^i a^{i}$ and $a^2-1 = (a-1)(a+1)$, the conditions in Theorem \ref{thm:2p} are equivalent to that $a^{p-1} \equiv \sum_{i=0}^{p-2} (-1)^{i+1} a^{i}~\mod n$ and $\gcd(a^i \pm 1, n) = 1$ for $2 \le i \le p-1$. Thus each $[u] \in \Z_n$ can be expressed as $[u] = [\sum_{i=0}^{p-2} u_{i} a^{i}]$ for some integers $u_0, u_1, \ldots, u_{p-2}$. Obviously this representation is not unique and without loss of generality we may assume $u_i \ge 0$ for $0 \le i \le p-2$. The neighbours of $[u]$ are $[u] + [a^j]$, $0 \le j \le 2p-1$, and $H = \la [a] \ra$ cyclically `rotates' the `directions' $[a^j]$ at $[u]$ in the obvious way. From a geometric point of view this determines a cyclic permutation of the edges incident with $[u]$ and thus defines an embedding of $\Cay(\Z_n, \la [a] \ra)$ on a closed orientable surface as a balanced regular Cayley map (see \cite[Corollary 2.9]{TZ2}). Note that $\Cay(\Z_n, \la [a] \ra)$ is a rotational Cayley graph. 
\end{remark}

\delete
{
\begin{remark}
Similar to the proof of Theorem \ref{thm:2p}, one can prove the following result for any odd prime $p$ and integer $n \ge 2p^2 + 1$ by using the fact that $C_{p^2} \times C_2$ and $C_{p} \times C_p \times C_2$ are the only Abelian groups of order $2p^2$ up to isomorphism: A $2p^2$-valent circulant graph $\Cay(\Z_n, S)$ with $[1] \in S$ is a first kind Frobenius circulant with cyclic kernel if and only if $n \equiv 1~\mod 2p^2$ and one of the following holds: 
\begin{itemize}
\item[(i)] $S = \la [a] \ra \le \Z_n^*$, where $[a]$ is an element of $\Z_n^*$ such that $a^{p^2} + 1 \equiv 0~\mod n$ and the integers $a^i \pm 1$ for $1 \le i \le p^2 - 1$ are all coprime to $n$; 
\item[(ii)] $S = \la [a] \ra \times \la [b] \ra \le \Z_n^*$, where $[a]$ and $[b]$ are elements of $\Z_n^*$ such that $a^{p} + 1 \equiv b^{p} - 1 \equiv 0~\mod n$, $\la [a] \ra \cap \la [b] \ra = \{[1]\}$, and the integers $a^i \pm 1$, $b^j \pm 1$ and $a^i b^j \pm 1$ for $1 \le i, j \le p - 1$ are all coprime to $n$. 
\end{itemize}
In each case $\Cay(\Z_n, S)$ is a $2p^2$-valent $\Z_n \rtimes S$-arc transitive first kind $\Z_n \rtimes S$-Frobenius circulant.  

Similarly, by using the fact that $C_8, C_4 \times C_2$ and $C_2^3$ are the only Abelian groups of order $8$ up to isomorphism, one can prove that a 8-valent circulant graph $\Cay(\Z_n, S)$ is a first kind Frobenius graph with cyclic kernel if and only if $n \equiv 1~\mod 8$ and $S = \la [a] \ra \le \Z_n^*$, where $[a]$ is an element of $\Z_n^*$ such that $a^4 + 1 \equiv 0~\mod n$  and $a \pm 1, a^2+1, a^2 \pm a +1$ are all coprime to $n$. In this case $\Cay(\Z_n, \la [a] \ra)$ is a $8$-valent $\Z_n \rtimes S$-arc transitive first kind $\Z_n \rtimes S$-Frobenius circulant.  
\end{remark}
}
 
\begin{thm}
\label{cor:cir-cyclo}
Let $p$ be an odd prime and $n \ge 2p+1$ an integer with $n \equiv 1~\mod 2p$. Then the first kind Frobenius circulant $\Cay(\Z_n, \la [a] \ra)$ in Theorem \ref{thm:2p} is isomorphic to $G_{p}(A_{p, n, -a})$. 
\end{thm}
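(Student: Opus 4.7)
The plan is to obtain the result as a direct application of Lemma \ref{thm:cir-cyclo} after substituting $-a$ for $a$ there. More precisely, let $a'$ be the unique positive integer with $1 \le a' \le n$ and $a' \equiv -a \mod n$. Since everything in the hypothesis and conclusion of Lemma \ref{thm:cir-cyclo} depends only on the residue class of the integer modulo $n$, the set $A_{p, n, a'}$ coincides with $A_{p, n, -a}$, and $\la [-a'] \ra = \la [a] \ra$ in $\Z_n^*$. So if the hypotheses of Lemma \ref{thm:cir-cyclo} hold with $m = p$ and $a'$ in place of $a$, we immediately conclude $\Cay(\Z_n, \la [a] \ra) \cong G_p(A_{p, n, -a})$, as required. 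The task therefore reduces to verifying those hypotheses.

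The hypotheses to check are: (i) $n$ and $m = p$ are odd; (ii) $(a')^p \equiv 1 \mod n$ but $(a')^i \not\equiv \pm 1 \mod n$ for $1 \le i \le p-1$; and (iii) the relations $(a')^i \equiv \sum_{j=0}^{\phi(p)-1} c_{ij}\,(a')^j \mod n$ hold for $\phi(p) \le i \le p-1$. Item (i) is immediate from $p$ being an odd prime and $n \equiv 1 \mod 2p$. For (ii), since $p$ is odd, $(a')^p \equiv (-a)^p = -a^p \equiv 1 \mod n$ (using $a^p \equiv -1$ from Theorem \ref{thm:2p}). For $1 \le i \le p-1$, $(a')^i \equiv (-1)^i a^i \mod n$, so $(a')^i \equiv \pm 1 \mod n$ would force $a^i \equiv \pm 1 \mod n$, contradicting $\gcd(a^i \pm 1, n) = 1$ (noting $n \ge 2p+1 > 1$).

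The main content is the verification of (iii). Since $\phi(p) = p-1$, the range $\phi(p) \le i \le p-1$ contains only $i = p-1$. Using the minimal polynomial $1 + x + x^2 + \cdots + x^{p-1}$ of $\z_p$ over $\Q$, we have $\z_p^{p-1} = -1 - \z_p - \z_p^2 - \cdots - \z_p^{p-2}$, so $c_{p-1, j} = -1$ for $0 \le j \le p-2$. Thus condition (iii) becomes
\begin{equation*}
\sum_{i=0}^{p-1} (-a)^i \equiv 0 \mod n.
\end{equation*}
Multiplying by $1 + a = 1 - (-a)$ gives $1 - (-a)^p = 1 + a^p$, which is $\equiv 0 \mod n$ by Theorem \ref{thm:2p}. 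Since the same theorem guarantees $\gcd(a+1, n) = 1$, the factor $1 + a$ is a unit modulo $n$, so dividing through establishes the required congruence. The only potential obstacle here was that $\phi(p) \le i \le p-1$ might have produced more relations; but precisely because $p$ is prime we have $\phi(p) = p-1$ and only the single congruence above is needed, and it is exactly the one delivered by the defining property $a^p + 1 \equiv 0 \mod n$ of the Frobenius circulant. This completes the verification and, via Lemma \ref{thm:cir-cyclo}, the proof.
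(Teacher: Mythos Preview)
Your proof is correct and follows essentially the same route as the paper: apply Lemma \ref{thm:cir-cyclo} with $m=p$ and $-a$ in place of $a$, the key point being that $\phi(p)=p-1$ reduces condition \eqref{eq:a} to the single congruence $\sum_{i=0}^{p-1}(-a)^i\equiv 0\bmod n$, which follows from $a^p+1\equiv 0\bmod n$ together with $\gcd(a+1,n)=1$. Your write-up is in fact more thorough than the paper's, as you explicitly verify the side conditions (oddness of $n$, and $(a')^i\not\equiv\pm 1\bmod n$ for $1\le i\le p-1$) that the paper leaves implicit.
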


\begin{proof}
We have $\phi(p) = p-1$, $\z_p^{p-1} = -\sum_{j=0}^{p-2} \z_p^j$, and $A_{p,n,-a} = \{\sum_{i=0}^{p-2} a_{i} \z_p^{i} \in \Z[\z_p]: \sum_{i=0}^{p-2} a_{i} (-a)^{i} \equiv 0~\mod n\}$. Since $a^p + 1 \equiv 0~\mod n$ and $a+1$ is coprime to $n$, we have $(-a)^{p-1} \equiv -\sum_{j=0}^{p-2} (-a)^j~\mod n$, which means that $-a$ satisfies the condition (\ref{eq:a}). So $\Cay(\Z_n, \la [a] \ra) \cong G_{p}(A_{p, n, -a})$ by Lemma \ref{thm:cir-cyclo}. 
\qed
\end{proof} 

\begin{remark} 
In general, $A_{m,n,a}$ defined in \eqref{eq:Amna} is not necessarily a principal ideal. However, it must be a principal ideal if $m$ is one of the following integers:
$$
3, 4, 5, 7, 8, 9, 11, 12, 13, 15, 16, 17, 19, 20, 21, 24, 25, 27, 28, 32, 33, 35, 36, 40, 44, 45, 48, 60, 84.
$$
This is because there are precisely 29 cyclotomic fields $\Q(\z_m)$ with $\Z[\z_m]$ a principal ideal domain and they are given by these integers $m$ \cite{MM}. 
Thus, by Theorem \ref{cor:cir-cyclo}, we know that for $p = 3, 5, 7, 11, 13, 17, 19$, $\Cay(\Z_n, \la [a] \ra)$ in Theorem \ref{thm:2p} is isomorphic to $G_{p}(\a)$ for some $0 \ne \a \in \Z[\z_p]$. It would be interesting to investigate when the converse of this statement is true (see \cite[Theorem 5(b)]{TZ3} in the case when $p = 3$). 
\end{remark} 
 
\begin{problem}
\label{prob}
Let $t \ge 1$ be an integer. For $p = 5, 7, 11, 13, 17, 19$, find necessary and sufficient conditions for $(\b)/(\a)$ to be a perfect $t$-code in $G_{p}(\a)$ (or $G^*_{p}(\a)$), where $0 \ne \a, \b \in \Z[\z_p]$ such that $\b$ divides $\a$. 
\end{problem}

In view of Corollary \ref{cor:ptdom}, the first key step towards this problem may be to acquire detailed knowledge of the distance in $G_{p}(\a)$ (or $G^*_{p}(\a)$) and the size of the $t$-neighbourhood of a vertex in the graph. 

In the case when $p = 3$, Theorem \ref{cor:cir-cyclo} asserts that, for any odd integer $n \ge 7$ and positive integer $a$ such that $a^2 - a + 1 \equiv 0~\mod n$ and $a^2 \pm 1$ is coprime to $n$, the 6-valent first kind Frobenius circulant 
$$
TL_{n}(a, a-1, 1) := \Cay(\Z_n, \la [a] \ra)
$$ 
is isomorphic to the Eisenstein-Jacobi graph $EJ_{\a} = G_{3}(A_{3, n, -a})$ (see Remark \ref{rem:rho}), a result noticed in \cite[Theorem 5(a)]{TZ3} (with more details), where $A_{3, n, -a} = \{c+d\rho \in \Z[\rho]: c + d a \equiv 0~\mod n\} = (\a)$ for some $0 \ne \a \in \Z[\rho]$ as $\Z[\rho]$ is an Euclidean domain.  

\begin{figure}[ht]
\centering
\includegraphics*[height=11.0cm]{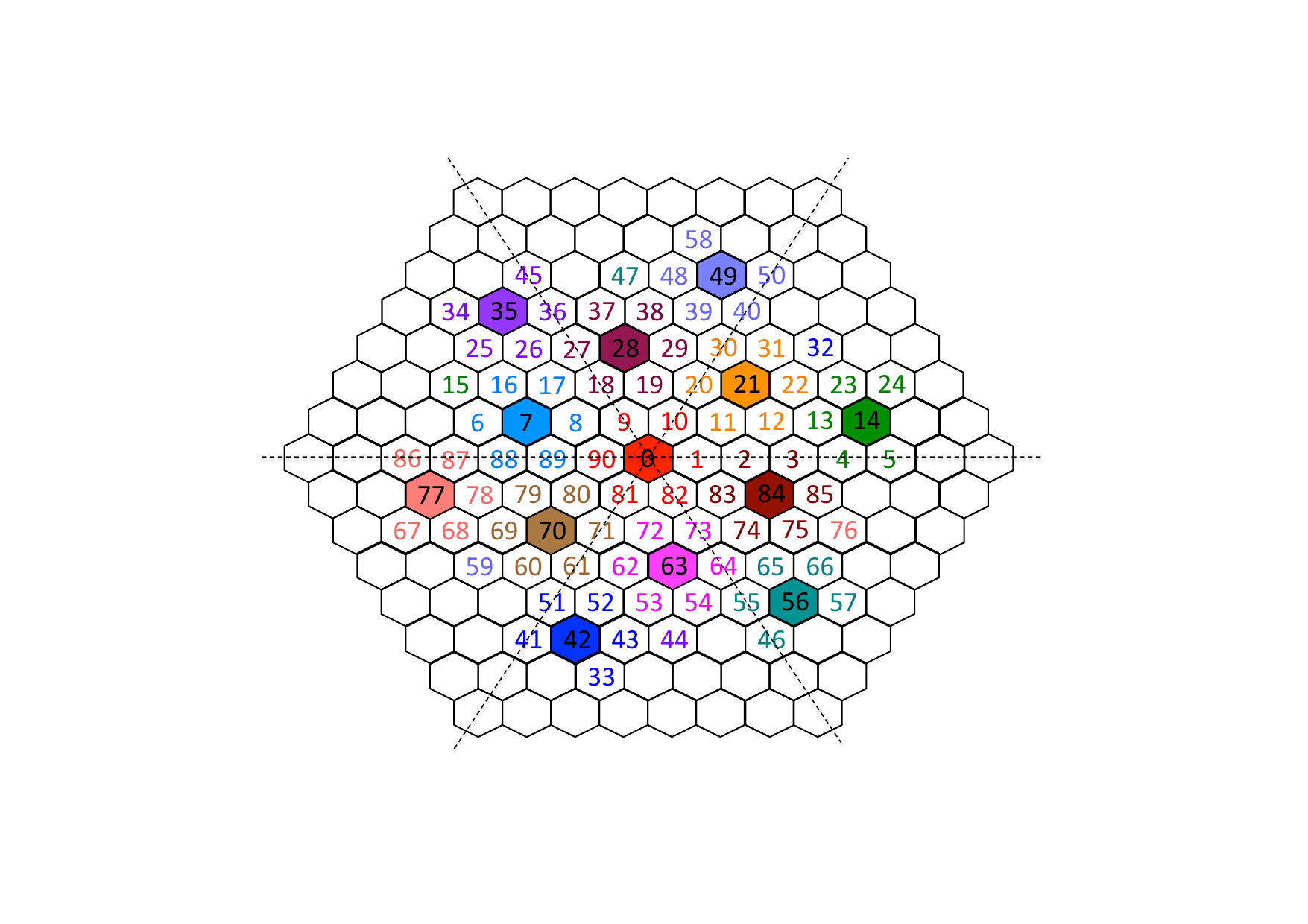}
\caption{\small A perfect $1$-code in $EJ_{1+9\rho} \cong TL_{91}(10, 9, 1)$.} 
\label{fig:hex91}
\end{figure}  

We finish this paper by the following example to illustrate Theorems \ref{thm:EJ} and \ref{cor:cir-cyclo}. 

\begin{ex}
\label{ex:91}
Let $a = 10$ and $n = a^2 - a + 1 = 91$. Then by Theorem \ref{cor:cir-cyclo} (see also \cite[Example 2]{TZ3}) $TL_{91}(10, 9, 1)$ is isomorphic to $EJ_{\a}$ for some $0 \ne \a \in \Z[\rho]$. In fact, by \cite[Theorem 5(a)]{TZ3}, $\a = 1+9\rho$ and $f: x+10y~\mod 91 \mapsto x+y\rho~\mod \a$ is an isomorphism from $TL_{91}(10, 9, 1)$ to $EJ_{1+9\rho}$, where $x$ and $y$ are integers. By Theorem \ref{thm:EJ}, the only perfect $t$-codes in $EJ_{1+9\rho}$ of the form $(\b)/(1+9\rho)$ are given by $\b = (t+1) + t \rho$, $t + (t+1)\rho$ with $\b$ dividing $1+9\rho$, where $1 \le t \le 4$. One can see that, for $t = 2, 3, 4$, $N(\b)$ is not a divisor of $N(1+9\rho) = 91$ and so $\b$ does not divide $1+9\rho$. Moreover, $1+2\rho$ does not divide $1+9\rho$ whilst $1+9\rho = (2+\rho)(4-\rho)$. Therefore, the only perfect code in $EJ_{1+9\rho}$ of the form $(\b)/(1+9\rho)$ is $(2+\rho)/(1+9\rho)$, which is a perfect $1$-code with size $N(1+9\rho)/N(2+\rho) = 13$. 

It can be verified that $(2+\rho)/(1+9\rho) = \{j(1+2\rho)~\mod (1+9\rho): 0 \le j \le 12\}$. Since $f^{-1}: j(1+2\rho)~\mod (1+9\rho) \mapsto 21j~\mod 91$, $0 \le j \le 12$, we may view $(2+\rho)/(1+9\rho)$ as the perfect $1$-code $C := \{0, 21, 42, 63, 84, 14, 35, 56, 77, 7, 28, 49, 70\}$ ($\mod 91$) in $TL_{91}(10, 9, 1)$. Following \cite[Section 5]{TZ3}, we can represent this graph by its minimum distance diagram as shown in Figure \ref{fig:hex91} (the area with numbers), where each vertex is adjacent to the six vertices in the neighbouring cells. By the discussion in \cite[Section 5]{TZ3}, the whole plane can be tessellated by copies of this minimum distance diagram. The 13 coloured vertices (numbers) in Figure \ref{fig:hex91} constitute the perfect $1$-code $C$, and the ball of radius one centred at each coloured vertex consists of the coloured vertex itself and its six neighbours. For example, the ball of radius one centred at $84$ is $\{84, 3, 2, 83, 74, 75, 85\}$, and that centred at $42$ is $\{42, 52, 51, 41, 32, 33, 43\}$. Equivalently, we can label the hexagonal cells by the elements of $\Z[\rho]/(1+9\rho)$, say, $21 = 1+2 \cdot 10$ can be replaced by $1+2\rho$, $78 = 8 + 7 \cdot 10$ by $8 + 7 \rho$, and so on. 
\end{ex}

\section*{Acknowledgements}
   
The author was supported by the Australian Research Council (FT110100629). He thanks Alex Ghitza for helpful discussions on number theory and He Huang for critical comments on earlier versions of this paper.  
  
{\small

}

\end{document}